\definecolor{mygray}{gray}{0.85}
\renewcommand{\leq}{\leqslant}
\renewcommand{\geq}{\geqslant}
\renewcommand{\nleq}{\nleqslant}
\def\subsection{\@startsection{subsection}{3}%
  \z@{.5\linespacing\@plus.7\linespacing}{.3\linespacing}%
  {\bfseries\centering}}
\def\subsubsection{\@startsection{subsubsection}{3}%
  \z@{.5\linespacing\@plus.7\linespacing}{.3\linespacing}%
  {\centering}}
\def\myfnt{\ifx\protect\@typeset@protect\expandafter\footnote\else\expandafter\@gobble\fi}
\newtheorem{theorem}{Theorem}[section]
\newtheorem{definition}[theorem]{Definition}
\newtheorem{observation}[theorem]{Observation}
\newtheorem{fact}[theorem]{Fact}
\newtheorem{remark}[theorem]{Remark}
\newtheorem{notation}[theorem]{Notation}
\newtheorem{cclaim}[theorem]{Claim}
\newtheorem{convention}[theorem]{Convention}
\newtheorem*{nproblem}{Problem}
\newtheorem*{nnotation}{Notation}
\newtheorem*{ntheorem1}{Theorem~1.1}
\newtheorem*{ntheorem2}{Theorem~1.2}
\newtheorem*{ntheorem3}{Theorem~1.3}
\newcounter{claimcounter}
\numberwithin{claimcounter}{theorem}
\newcommand{\mrm}[1]{\mathrm{#1}}
\begin{document}

\dedicatory{This paper is dedicated to Moshe Jarden, in honor of his many contributions to field arithmetic.}

\begin{abstract} We deal with the problem of existence of uncountable co-Hopfian abelian groups and (absolute) Hopfian abelian groups. Firstly, we prove that there are no co-Hopfian reduced abelian groups $G$ of size $< \mathfrak{p}$ with infinite $\mrm{Tor}_p(G)$, and that in particular there are no infinite reduced abelian $p$-groups of size $< \mathfrak{p}$. Secondly, we prove that if $2^{\aleph_0} < \lambda < \lambda^{\aleph_0}$, and $G$ is abelian of size $\lambda$, then $G$ is not co-Hopfian. Finally, we prove that for every cardinal $\lambda$ there is a torsion-free abelian group $G$ of size $\lambda$ which is absolutely Hopfian, i.e., $G$ is Hopfian and $G$ remains Hopfian in every forcing extensions of the universe.
\end{abstract}

\title[On the Existence of Uncountable (co-)Hopfian Abelian Groups]{On the Existence of Uncountable Hopfian and co-Hopfian Abelian Groups}

\thanks{No. 1214 on Shelah's publication list. Research of both authors partially supported by NSF grant no: DMS 1833363. Research of first author partially supported by project PRIN 2017 ``Mathematical Logic: models, sets, computability", prot. 2017NWTM8R. Research of second author partially supported by Israel Science Foundation (ISF) grant no: 1838/19}

\author{Gianluca Paolini}
\address{Department of Mathematics ``Giuseppe Peano'', University of Torino, Via Carlo Alberto 10, 10123, Italy.}

\author{Saharon Shelah}
\address{Einstein Institute of Mathematics,  The Hebrew University of Jerusalem, Israel \and Department of Mathematics,  Rutgers University, U.S.A.}

\date{\today}
\maketitle

%

\section{Introduction}

	A group $G$ is said to be Hopfian (resp. co-Hopfian) if every onto (resp. $1$-to-$1$) endomorphism of $G$ is $1$-to-$1$ (resp. onto), equivalently $G$ is Hopfian if it has no proper quotient isomorphic to itself and co-Hopfian if it has no proper subgroup isomorphic to itself. For example, $\mathbb{Z}$ is Hopfian but not co-Hopfian, while the Pr\"ufer $p$-group $\mathbb{Z}(p^\infty)$ is co-Hopfian but not Hopfian. The notions of  Hopfian and co-Hopfian groups have been studied for a long time, under different names. In the context of abelian group theory they were first considered by Baer in \cite{baer}, where he refers to them as $Q$-groups and $S$-groups. The modern terminology arose from the work of the German mathematician H. Hopf, who showed that the defining property holds of certain two dimensional manifolds. The research on Hopfian and co-Hopfian abelian groups has recently been revived thanks to its recently discovered connections with the study of algebraic entropy and its dual (see \cite{entropy1, entropy2}), as e.g. groups of zero algebraic entropy are necessarily co-Hopfian (for more on the connections between these two topics see \cite{entropy3}). In this paper we will focus exclusively on abelian groups and for us ``group'' will mean ``abelian group''. 
	
	We briefly recall the relevant state of the art in this area and then introduce our motivation and state our theorems. We start by considering the co-Hopfian property. An easy observation shows that a torsion-free abelian group is co-Hopfian if and only if it is divisible of finite rank, hence the problem naturally reduces to the torsion and mixed cases. A major progress in this line of research was made by Beaumont and Pierce in \cite{b&p} where the authors proved several general important results, in particular that if $G$ is co-Hopfian, then $\mrm{Tor}(G)$ is of size at most continuum, and further that $G$ cannot be a $p$-groups of size $\aleph_0$. This naturally left open the problem of existence of co-Hopfian $p$-groups of uncountable size $\leq 2^{\aleph_0}$, which was later solved by Crawley \cite{crawley} who proved that there exist  $p$-groups of size $2^{\aleph_0}$. But the question remained: what about $p$-groups $G$ of size $\aleph_0 < |G| < 2^{\aleph_0}$? Interestingly enough this question remained open until recently, when it was shown by Braun and Str\"ungmann \cite{independence_paper} that this is independent from $\mrm{ZFC}$. Finally, at the best of our knowledge there are no results on the existence of co-Hopfian groups of size $>2^{\aleph_0}$ (and possibly for a ``good reason'', see the discussion after Theorem~\ref{first_main_th}).
	 
	 Moving to Hopfian groups, the situation is quite different, most notably (improving a result of Fuchs \cite{fuchs_indec}, who proved this for $\lambda <$ the first beautiful cardinal) the second author showed in \cite{sh44} that for every infinite cardinal $\lambda$ there is an endorigid torsion-free group of cardinality $\lambda$, i.e., a group $G$ such that for every endomorphism $f$ of $G$ there is $m_f \in \mathbb{Z}$ such that $f(x) = m_fx$ (and such that $f$ is onto iff $m_f \in \{1, -1\}$), evidently such groups are Hopfian and so there are Hopfian groups in every cardinality (recall that finite groups are Hopfian). Hence, the existence of Hopfian groups seems to be settled, but the construction from \cite{sh44} uses stationary sets, so one may wonder about the ``effectiveness'' of the construction from \cite{sh44} or any other known construction of arbitrarily large Hopfian groups. We focus  here on a specific notion of ``effectiveness'' which was suggested for abelian groups by Nadel in \cite{nadel}, i.e., the preservation under any forcing extension of the universe $\mathrm{V}$. We refer to this as the problem of absolute existence (of a group satisfying a certain property). These kind of problems were considered by Fuchs, G\"obel, Shelah and others (see e.g. \cite{678, 4_primes, abs_ind}), probably the most important problem in this area is the problem of existence of absolutely indecomposable groups in every cardinality which remains open to this day (despite several partial answers are known).
	 
	\smallskip

	Relying on the picture sketched above in this paper we consider three major problems on the existence of Hopfian and co-Hopfian groups, namely:
	\begin{nproblem}
	\begin{enumerate}[(1)]
		\item Despite the known necessary restrictions, can we improve (in $\mrm{ZFC}$!) the result from \cite{b&p} that there are no co-Hopfian $p$-groups of size $\aleph_0$ or $> 2^{\aleph_0}$?
	\item Are there co-Hopfian groups in every (resp. arbitrarily large) cardinality?
	\item Are there absolutely Hopfian groups in every cardinality?
\end{enumerate}
\end{nproblem}
	
	We give solutions to the three problems above with the following three theorems.
	
	\begin{nnotation} We denote by $\mrm{AB}$ the class of abelian groups and by $\mrm{TFAB}$ the class of torsion-free abelian groups. Also, given a cardinal $\lambda$, we denote by $\mrm{AB}_\lambda$ the class of $G \in \mrm{AB}$ of cardinality $\lambda$ and by $\mrm{TFAB}_\lambda$ the class of $G \in \mrm{TFAB}$ of cardinality~$\lambda$.
\end{nnotation}

\begin{theorem}\label{main_th1} Suppose that $G \in \mrm{AB}$ is reduced and $\aleph_0 \leq |G| < 2^{\aleph_0}$. If $\mathfrak{p} > |G|$ and there is a prime $p$ such that $\mrm{Tor}_p(G)$ is infinite, then $G$ is not co-Hopfian. In particular there are no infinite reduced co-Hopfian $p$-groups $G$ of size $\aleph_0 \leq |G| < \mathfrak{p}$.
\end{theorem}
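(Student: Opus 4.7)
Let $T := \mrm{Tor}_p(G)$, an infinite reduced $p$-group of cardinality $|T| \le |G| < \mathfrak{p}$. The plan is to exhibit an injective but not surjective endomorphism of $G$.

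A preliminary routine step is to verify that $T$ is pure in $G$: if $t \in T$ and $ng = t$ for some $g \in G$, then $g$ is necessarily torsion (since $p^{k}(ng) = 0$), and splitting $g$ into its $p$-primary and $p'$-primary parts gives $t = n g_p$ with $g_p \in T$. By transitivity of purity, every pure subgroup of $T$ is pure in $G$; if it is moreover bounded, Kulikov's theorem yields that it is a direct summand of $G$.

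Next I would analyse the basic subgroup $B = \bigoplus_{n \ge 1} B_n$ of $T$, where $B_n$ is a direct sum of copies of $\mathbb{Z}/p^n$. The easy case, which does not require $\mathfrak{p}$, arises when some $B_n$ is infinite (and in particular when $T$ is bounded). Then one extracts a direct summand $K \cong \bigoplus_\omega \mathbb{Z}/p^n$ of $B_n$, which is pure in $B$, hence in $T$; since $K$ is bounded the previous paragraph gives $G = K \oplus H$. The shift endomorphism on $K$ sending the $i$-th summand onto the $(i+1)$-th, combined with $\mrm{id}_H$, is then an injective endomorphism of $G$ missing the zeroth summand of $K$, so $G$ is not co-Hopfian.

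The hard case is when every $B_n$ is finite; then $B$ is countable and unbounded, while $T$ itself may be uncountable of size up to $2^{\aleph_0}$. Here the hypothesis $\mathfrak{p} > |G|$ is indispensable. The strategy I would pursue is a transfinite construction of length $|G|$: fix a target $t^* \in T[p] \setminus \{0\}$ and recursively extend a chain of partial injective homomorphisms $f_\alpha$ defined on an increasing chain of subgroups $G_\alpha \le G$, keeping $t^*$ outside the image at each stage. The $p$-adic height data constraining the as-yet-undefined image values can be encoded as a family of $< \mathfrak{p}$ infinite subsets of $\omega$ with the strong finite intersection property, and at each limit stage the pseudo-intersection supplied by $\mathfrak{p} > |G|$ allows the construction to continue. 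The principal obstacle I foresee is translating the algebraic requirements --- injectivity, avoidance of $t^*$, and compatibility with the generally non-split pure inclusion $T \hookrightarrow G$ --- into strong-finite-intersection families in such a way that a pseudo-intersection really does yield an algebraic extension at each limit stage.
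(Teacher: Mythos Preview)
Your bounded case is correct and matches the paper's disposal of that subcase. The unbounded case, however, is where your plan diverges and becomes genuinely incomplete.

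You propose a transfinite recursion of length $|G|$, extending partial injective endomorphisms along a chain of subgroups and invoking $\mathfrak{p}$ at limit stages. You yourself flag the obstacle: translating injectivity, image-avoidance, and compatibility with a non-split pure embedding into strong-finite-intersection families is not at all clear, and the paper does not attempt anything like this. The paper's approach is structurally different. Rather than building the injective endomorphism directly, it builds a \emph{small perturbation} $f \in \mrm{End}(G)$ with range inside a countable subgroup $K = \bigoplus_{i<\omega} K_i$ (each $K_i$ a finite cyclic $p$-group, obtained as pure cyclic summands of increasing order in $\mrm{Tor}_p(G)$), and then takes $\mrm{id}_G - f$ as the witness. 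The criterion used is: $G$ is not co-Hopfian iff there exist $f \in \mrm{End}(G)$ and $z \in G$ with $f(x) \neq x$ for all $x \neq 0$ and $z \notin \{x - f(x) : x \in G\}$.

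The endomorphism $f$ is obtained as an infinite sum $f = \sum_{i<\omega} f_i$ where each $f_i : G \to K_i$ is a homomorphism into a finite group. For the sum to make sense one needs that, for every $x \in G$, $f_i(x) = 0$ for all but finitely many $i$. In the countable case this is arranged by enumerating $G = \{a_j : j < \omega\}$ and, at stage $i$, using a pigeonhole argument on the finitely many possible values in $K_i$ to produce an $f_i$ vanishing on $\{a_0,\dots,a_i\}$. For uncountable $G$ one cannot enumerate $G$ in type $\omega$, and this is the sole place $\mathfrak{p}$ enters: for each finite $A \subseteq G$ and each $n < \omega$, the pigeonhole step yields an infinite set $X_{(A,n)}$ of ``good'' index-triples, and these sets are downward directed under $\supseteq$. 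There are $|G|$ many such pairs $(A,n)$, so $\mathfrak{p} > |G|$ gives a single pseudo-intersection $X_*$, from which one extracts the sequence $(f_i)_{i<\omega}$ with the required almost-vanishing property. Thus $\mathfrak{p}$ is invoked \emph{once}, on a family of subsets of a fixed countable index set, not repeatedly along a transfinite recursion; and the construction never needs to extend partial maps across the non-split inclusion $T \hookrightarrow G$, since each $f_i$ is globally defined from the outset (projections onto bounded pure summands extend to all of $G$ by pure-injectivity of bounded groups).
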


	\begin{theorem}\label{main_th2} If $2^{\aleph_0} < \lambda < \lambda^{\aleph_0}$, and $G \in \mrm{AB}_\lambda$, then $G$ is not co-Hopfian.
\end{theorem}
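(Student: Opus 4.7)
The strategy is to argue by contradiction: assuming $G \in \mrm{AB}_\lambda$ is co-Hopfian with $2^{\aleph_0} < \lambda < \lambda^{\aleph_0}$, I will produce an injective non-surjective endomorphism of $G$ by reducing to the torsion-free quotient and lifting back.

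First, write $G = D \oplus R$ with $D$ the maximal divisible subgroup and $R$ its reduced complement. Any direct summand of a co-Hopfian group is co-Hopfian, because an injective non-surjective endomorphism of the summand extends by the identity on the complement to an injective non-surjective endomorphism of the whole. Hence $D$ is divisible and co-Hopfian, which forces $D \cong \mathbb{Q}^n \oplus \bigoplus_{p} \mathbb{Z}(p^\infty)^{n_p}$ with all exponents finite, so $|D| \leq 2^{\aleph_0} < \lambda$. Thus $|R| = \lambda$, and we may replace $G$ by $R$; i.e., assume $G$ is reduced.

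By the Beaumont--Pierce result cited in the introduction, $|\mrm{Tor}(G)| \leq 2^{\aleph_0} < \lambda$. Setting $T = \mrm{Tor}(G)$ and $H = G/T$, the quotient $H$ is torsion-free of cardinality $\lambda > 2^{\aleph_0}$. As emphasized in the introduction, a torsion-free co-Hopfian group is divisible of finite rank and hence has cardinality at most $2^{\aleph_0}$; so $H$ is not co-Hopfian. Concretely, pick a prime $p$ with $pH \neq H$: multiplication by $p$ is then an injective non-surjective endomorphism $\bar f$ of $H$.

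The final and decisive step is to lift $\bar f$ to an injective endomorphism $f:G \to G$ whose image remains properly contained in $G$. Because the extension $0 \to T \to G \to H \to 0$ need not split, an arbitrary set-theoretic lift of $\bar f$ will fail to be additive; one must choose torsion-valued corrections coherently. This is precisely where the cardinal hypothesis $\lambda < \lambda^{\aleph_0}$ intervenes: the set $G^{\aleph_0}$ has cardinality strictly greater than $\lambda$, while the collection of possible torsion obstructions is controlled by the Beaumont--Pierce bound $|T| \leq 2^{\aleph_0}$. A pigeonhole argument on countable patterns of elements in $G$, modulo the small torsion, then selects a $\lambda$-sized family of coherent preimages which can be glued into a genuine homomorphism $f:G \to G$ lifting $\bar f$; since $f(G)$ projects onto $\bar f(H) \subsetneq H$ under the quotient map, $f(G) \subsetneq G$, contradicting co-Hopfianness. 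The principal obstacle I foresee is exactly this lifting step: building $f$ as an honest additive map rather than a mere set function requires a careful management of the cocycle measuring the failure of $0 \to T \to G \to H \to 0$ to split, and it is there that the bulk of the technical work lives and the hypothesis $\lambda < \lambda^{\aleph_0}$ becomes indispensable.
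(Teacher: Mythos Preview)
Your reductions to $G$ reduced with $|\mrm{Tor}(G)| \leq 2^{\aleph_0}$ are fine, but the proposal has a genuine gap at the decisive step, and the strategy itself is not the paper's.

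First, a minor but real issue: you assert ``pick a prime $p$ with $pH \neq H$'' for $H = G/\mrm{Tor}(G)$, but you have not shown $H$ is non-divisible. The quotient of a reduced group by its torsion can be divisible (e.g.\ $\prod_p \mathbb{Z}/p\mathbb{Z}$ modulo its torsion), so this needs an argument; in the paper the analogous prime is found not in $G/\mrm{Tor}(G)$ but inside a rank-one torsion-free summand $L_n$ sitting in $G$ itself, which is non-divisible because $G$ is reduced.

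Second, and this is the real gap: any homomorphic lift of $\bar f = p\cdot\mrm{id}_H$ has the form $f = p\cdot\mrm{id}_G + h$ with $h \in \mrm{Hom}(G, T)$, and such an $f$ is automatically injective off $T$ but is injective on $T$ only if $p\cdot\mrm{id}_T + h\restriction T$ is injective. When $\mrm{Tor}_p(G)$ is infinite and unbounded (precisely the residual case after the easy Cases 1--5), manufacturing such an $h$ is a hard problem about endomorphisms of a reduced $p$-group of size $\leq 2^{\aleph_0}$, and it is not at all clear how the hypothesis $\lambda < \lambda^{\aleph_0}$ --- a statement about the number of $\omega$-sequences in $\lambda$ --- bears on it. Your ``pigeonhole on countable patterns modulo small torsion'' is not an argument; it is a placeholder where the entire content of the theorem would have to go.

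The paper does something quite different: it never lifts from $G/T$. Instead it builds a chain $G_n \preccurlyeq_{\mathfrak{L}_{\aleph_1,\aleph_0}} G$ with $|G_n| = \lambda_n$ (where $\mu = \sup_n \lambda_n$ has $\mu^{\aleph_0} = \lambda^{\aleph_0}$), finds rank-one complements $L_{(n,\eta)}$ indexed by a tree $\prod_{\ell \leq n}\lambda_\ell$, and constructs homomorphisms $f_{(n,\eta)}: G \to K_{(p_n,n)}$ into fixed \emph{finite} $p$-groups. The role of $\lambda^{\aleph_0} > \lambda$ is purely a counting argument: the tree has $\lambda^{\aleph_0}$ branches while the set $\Lambda$ of branches $\nu$ for which $\sum_n f_{(n,\nu\restriction n)}$ fails to converge has size $\leq |G|\cdot 2^{\aleph_0} < \lambda^{\aleph_0}$, so some branch $\nu \notin \Lambda$ exists; for that $\nu$ the sum $f_\nu$ is a well-defined endomorphism of $G$ with range in a countable torsion group $K$, and $\mrm{id}_G - f_\nu$ is injective non-surjective by the criterion of Observation~\ref{suff_cond}. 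The cardinal arithmetic buys convergence of an infinite sum of maps into torsion, not control of an $\mrm{Ext}$-cocycle.
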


\begin{theorem}\label{first_main_th} For all $\lambda \in \mrm{Card}$ there is $G \in \mrm{TFAB}_\lambda$ which is absolutely Hopfian.
\end{theorem}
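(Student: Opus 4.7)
The plan is to construct, for each infinite cardinal $\lambda$, a torsion-free $G \in \mrm{TFAB}_\lambda$ whose endomorphism ring is $\mathbb{Z}$ by an argument that remains valid in every forcing extension of $V$. Since multiplication by $m \in \mathbb{Z}\setminus\{0\}$ on a torsion-free group is surjective only when $m = \pm 1$ (and these are automorphisms), such a $G$ is absolutely Hopfian.

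For $\lambda \leq 2^{\aleph_0}$ the construction is elementary. Fix an almost-disjoint family $\{S_\alpha : \alpha<\lambda\}$ of infinite subsets of the set $\mathbb{P}$ of primes, and for each $\alpha$ let $A_\alpha = \langle 1/p : p \in S_\alpha\rangle \leq \mathbb{Q}$ be the rank-$1$ torsion-free group with $p$-height of $1$ equal to $1$ for $p \in S_\alpha$ and $0$ elsewhere. Set $G := \bigoplus_{\alpha<\lambda} A_\alpha$. A short computation in $\mathbb{Q}$ gives $\mrm{End}(A_\alpha) = \mathbb{Z}$ (if $q = a/b$ in lowest terms satisfies $qA_\alpha \subseteq A_\alpha$, one checks $b = \pm 1$) and $\mrm{Hom}(A_\alpha, A_\beta) = 0$ for $\alpha \neq \beta$ (since $q \cdot 1/p \in A_\beta$ for all $p \in S_\alpha$ forces the numerator of $q$ to be divisible by every prime of the infinite set $S_\alpha\setminus S_\beta$, hence $q = 0$). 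For any endomorphism $f$ of $G$ and $x \in A_\alpha$, the projection of $f(x)$ onto $A_\beta$ is a homomorphism $A_\alpha \to A_\beta$, which vanishes for $\beta \neq \alpha$; so $f$ preserves each summand and acts on $A_\alpha$ as multiplication by some $m_\alpha \in \mathbb{Z}$. If $f$ is onto then each $m_\alpha = \pm 1$, so $f$ is injective. Every assertion here concerns only $\mathbb{Q}$, $\mathbb{P}$, and the fixed family $\{S_\alpha\}$, which lie in $V$ and are unchanged by forcing; hence $G$ is absolutely Hopfian.

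For $\lambda > 2^{\aleph_0}$ the rank-$1$ technique is exhausted, since only $2^{\aleph_0}$-many rank-$1$ torsion-free groups exist up to isomorphism. Here the plan is a transfinite construction of length $\lambda$ producing a continuous chain $(G_\alpha : \alpha < \lambda)$ with $G = \bigcup_{\alpha<\lambda} G_\alpha$, at each stage adjoining a new rank-$1$ summand $B_\alpha$ whose type is prescribed by an absolute piece of countable combinatorial data attached to $\alpha$ (for instance, an almost-disjoint family of countable subsets of an absolutely-coded ambient set indexing $\alpha$). The rigidity conditions should be chosen so that every potential endomorphism in any universe is forced, via its countable witnesses, to act diagonally on the summands with integer multipliers, after which the surjectivity-implies-$\pm 1$ analysis of the bounded case applies verbatim.

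The main obstacle is exactly this step for $\lambda > 2^{\aleph_0}$: the stationary-set gadget used in Shelah's classical endorigid construction of \cite{sh44} is not in general preserved by forcing, and must be replaced with purely countable, almost-disjoint combinatorial data whose action on candidate endomorphisms is absolute. The technical heart of the argument is to reduce each rigidity verification to an absolute statement about countable parameters in $V$, so that endomorphisms arising only in a generic extension are still trapped by the combinatorial device constructed in $V$.
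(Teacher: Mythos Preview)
Your treatment of the case $\lambda \leq 2^{\aleph_0}$ is correct (though note that $\mrm{End}(G)$ for your direct sum is not $\mathbb{Z}$ but the ring of diagonal integer matrices; fortunately your actual argument only uses that every onto endomorphism has all entries $\pm 1$). This is, however, not what the paper does, and more importantly your proposal for $\lambda > 2^{\aleph_0}$ is not a proof: you describe a plan and then correctly identify its main obstacle without resolving it. The rank-$1$ mechanism really is exhausted above the continuum, and ``almost-disjoint countable combinatorial data attached to $\alpha$'' is not a construction --- you have not specified what these data are, why they yield $\mrm{Hom}$-rigidity between summands, or why the rigidity argument is absolute. As written this is a sketch of a research programme, not a proof.

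The paper's approach is entirely different and uniform in $\lambda$. It builds $G$ inside a $\mathbb{Q}$-vector space with basis $\{x_{(\ell,\alpha)} : \ell \in \{1,2\},\ \alpha < \lambda\}$, using a fixed finite supply of primes to impose divisibility relations indexed by the tree $\mrm{decr}(\lambda)$ of finite \emph{strictly decreasing} sequences of ordinals below $\lambda$. The point is that this tree is well-founded, and well-foundedness is absolute. One then writes down, by recursion on $\alpha < \lambda$, positive conjunctive existential formulas $\varphi_{(\ell,\alpha,n)}(x)$ of $\mathfrak{L}_{\infty,\aleph_0}$ whose solution sets in $G$ are exactly certain pure subgroups $G_{(\ell,\alpha,n)}$; since positive existential formulas are preserved by every endomorphism, any $f \in \mrm{End}(G)$ maps each $G_{(\ell,\alpha)} = G_{(\ell,\alpha,1)}$ into itself. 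These subgroups form a continuous strictly decreasing chain with trivial intersection, and an induction on $\alpha$ shows that an \emph{onto} $f$ must map each layer $G_{(\ell,\alpha)} \setminus G_{(\ell,\alpha+1)}$ into itself; hence $f$ has trivial kernel. All of this is carried out in $V^{\mrm{Levy}(\aleph_0,\lambda)}$ and depends only on the absolute data (the ordinals, the primes, and the positive formulas), which gives absoluteness. The key idea you are missing is replacing the fragile stationary-set machinery by the absolutely well-founded tree $\mrm{decr}(\lambda)$ together with the preservation of positive infinitary formulas under endomorphisms.
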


	We comment on the theorems above. Theorem \ref{first_main_th} can be considered conclusive in some respect (see also Remark~\ref{remark_4prime}), while Theorems \ref{main_th1} and \ref{main_th2} leave room for further investigations. First of all, Theorem  \ref{main_th1} gives important new information also on the countable case, and in fact in our work in preparation on countable co-Hopfian groups \cite{F2005} we crucially base our investigations upon this result. Also concerning  Theorem \ref{main_th1}, we might ask: is $\mathfrak{p}$ the right cardinal invariant of the continuum? The answer to this question is: yes, but not quite. In a work in preparation \cite{F2057} the second author introduces some new $\mathfrak{p}$-like cardinal invariants of the continuum that are tailored exactly to this purpose. Finally, Theorem~\ref{main_th2} leaves open the question of existence of arbitrarily large co-Hopfian groups, in another work in preparation \cite{F2044} the second author deals with questions surrounding this problem. Finally, a last word on the existence of arbitrarily large co-Hopfian groups: in \cite{F2066} the second author proves that there are no arbitrarily large absolutely co-Hopfian groups, in fact he proves that there are no such groups above the first beautiful cardinal, and so a construction of arbitrarily large co-Hopfian groups has to necessarily use some ``non-effective methods'', such as e.g. Black Boxes \cite{Sh:e}.

	As briefly mentioned above, in a work in preparation \cite{F2005} we deal with classification and anti-classification results for countable co-Hopfian groups, from the point of view of descriptive set theory, extending on results of our recent paper \cite{1205}.

	The structure of the paper is simple, in Section 2 we introduce the necessary notations and preliminaries, in Section 3 we prove Theorem~\ref{main_th1}, in Section 4 we prove Theorem~\ref{main_th2}, and finally in Section 5 we prove Theorem~\ref{first_main_th}.

\section{Notations and preliminaries}

	For readers of various backgrounds, we collect here a number of definitions, notations and (well-known) facts which will be used in the proofs of our theorems. 

	\begin{notation} We denote by $\mathbb{P}$ the set of prime numbers.
\end{notation}

\begin{notation} Let $G$ and $H$ be groups.
\begin{enumerate}[(1)]
	\item $H \leq G$ means that $H$ is a subgroup of $G$.
	\item We let $G^+ = G \setminus \{  0_G \}$, where $0_G = 0$ is the neutral element of $G$.
\end{enumerate}
\end{notation}

	\begin{definition}\label{def_pure} Let $H \leq G \in \mrm{AB}$, we say that $H$ is pure in $G$, denoted by $H \leq_* G$, when if $k \in H$, $n < \omega$ and $G \models ng = k$, then there is $h \in H$ such that $H \models nh = g$.
\end{definition}

	\begin{observation}\label{obs_pure_TFAB} If $H \leq_* G \in \mrm{TFAB}$, $k \in H$ and $0 < n < \omega$, then:
$$G \models ng = k \Rightarrow g \in H.$$
\end{observation}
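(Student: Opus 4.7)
The plan is very short: this is an immediate combination of purity with torsion-freeness, so the proof should be a two-line observation.

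First, I would apply Definition~\ref{def_pure} directly. We have $k \in H$, $0 < n < \omega$, and $G \models ng = k$. By purity of $H$ in $G$, there exists $h \in H$ such that $H \models nh = k$, hence also $G \models nh = k$.

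Next, I would combine the two equations $ng = k = nh$ in $G$ to obtain $G \models n(g - h) = 0$. Since $G \in \mrm{TFAB}$ and $n \neq 0$, this forces $g - h = 0$, i.e., $g = h \in H$, as desired.

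There is no real obstacle here; the statement is essentially the remark that in the torsion-free setting, purity upgrades from ``divisibility in $H$ equals divisibility in $G$'' to ``$H$ actually contains the (unique) divisor from $G$.'' The uniqueness of the divisor, which is where torsion-freeness is used, is exactly what distinguishes Observation~\ref{obs_pure_TFAB} from the defining clause of $\leq_*$ in Definition~\ref{def_pure}.
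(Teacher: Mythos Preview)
Your argument is correct and is exactly the natural two-line verification: apply Definition~\ref{def_pure} to produce $h \in H$ with $nh = k$, then use torsion-freeness of $G$ to conclude $g = h \in H$. The paper itself gives no proof for this observation (it is stated as self-evident), so your write-up is precisely what one would supply if asked to fill it in.
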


	\begin{observation}\label{generalG1p_remark} Let $G \in \mathrm{TFAB}$ and let:
	$$G_{(1, p)} = \{ a \in G_1 : a \text{ is divisible by $p^m$, for every $0 < m < \omega$}\},$$
then $G_{(1, p)}$ is a pure subgroup of $G_1$.
\end{observation}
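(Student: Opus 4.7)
The plan is to verify the two components of Definition~\ref{def_pure} for the inclusion $G_{(1,p)} \subseteq G_1$, taking advantage of the torsion-free simplification of purity recorded in Observation~\ref{obs_pure_TFAB}. First I would check that $G_{(1,p)}$ is a subgroup of $G_1$: given $a, b \in G_{(1,p)}$ and $0 < m < \omega$, write $a = p^m c$ and $b = p^m d$; then $a - b = p^m(c - d)$, so $a - b$ is divisible by $p^m$ for every $m$, and it lies in $G_1$ because $G_1$ is a subgroup. Hence $a - b \in G_{(1,p)}$.

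For purity, since $G_1 \in \mrm{TFAB}$, Observation~\ref{obs_pure_TFAB} reduces the task to showing that whenever $k \in G_{(1,p)}$, $n \geq 1$, and $g \in G_1$ satisfies $ng = k$, one already has $g \in G_{(1,p)}$. Fix such data together with $0 < m < \omega$; I must exhibit divisibility of $g$ by $p^m$. Factor $n = p^r n'$ with $\gcd(n', p) = 1$ and pick $y$ with $p^{m+r} y = k$, which is possible because $k \in G_{(1,p)}$. From $p^r n' g = k = p^{m+r} y$ and torsion-freeness of $G$ we deduce $n' g = p^m y$. Choosing integers $s, t$ with $s n' + t p^m = 1$ by Bezout's identity, we compute
$$g = s n' g + t p^m g = s p^m y + t p^m g = p^m (s y + t g).$$
Thus $p^m$ divides $g$ for every $m$, so $g \in G_{(1,p)}$, which completes the verification of purity.

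This is essentially a routine verification using torsion-freeness and Bezout's identity; I do not expect any serious obstacle. The only mild subtlety is that the divisibility witnesses $c, d, y$ come a priori from the ambient $G$ rather than from $G_1$, but this is harmless for the definition of $G_{(1,p)}$, and in any case $G_1$ is itself pure in $G$ in the torsion-free setting (so the witnesses could equivalently be taken inside $G_1$ via a second application of Observation~\ref{obs_pure_TFAB}).
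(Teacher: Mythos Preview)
Your verification is correct. The paper does not actually prove this observation at all: its proof reads in full ``This is well-known, see e.g.\ the discussion in \cite[pg.~386--387]{harrison}.'' So there is no argument to compare against; you have simply supplied the routine direct check (subgroup closure, then purity via B\'ezout and torsion-freeness) that the paper delegates to the literature. Your closing remark about where the divisibility witnesses live is a reasonable caution, though as you note it is immaterial in the torsion-free setting.
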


	\begin{proof} This is well-known, see e.g. the discussion in \cite[pg. 386-387]{harrison}.
\end{proof}

	\begin{notation} Given $G \in \mrm{AB}$ and $p \in \mathbb{P}$, we denote by $\mrm{Tor}(G)$ the torsion subgroup of $G$ and by $\mrm{Tor}_p(G)$ the $p$-torsion subgroup of $G$.
\end{notation}

	\begin{notation} Given $G \in \mrm{AB}$, $g \in G$ and $p \in \mathbb{P}$, we write $p^\infty \vert \, g$ to mean that $g$ is $p^n$-divisible (in $G$) for every $0 < n < \omega$.
\end{notation}

	\begin{definition} We say that $G$ has bounded exponent or simply that $G$ is bounded if there is $n < \omega$ such that $nG = \{0\}$.
\end{definition}

	\begin{notation} Given $G, H \in \mrm{AB}$, we denote by $\mrm{Hom}(G, H)$, the set of homomorphisms between $G$ and $H$. We denote by $\mrm{End}(G)$ the set $\mrm{Hom}(G, G)$.
\end{notation}

	Items \ref{kapla1}-\ref{suff_cond} below, which will be used in Section~\ref{sec_cohop_char}, are well-known to group theorists, we state them here (with references) for completeness of exposition. 

	\begin{fact}[{\cite[pg.~18, Theorem~7]{kaplansky}}]\label{kapla1} Let $G \in \mrm{AB}$ and $H$ a pure subgroup of $G$ of bounded exponent.  Then $H$ is a direct summand of $G$.
\end{fact}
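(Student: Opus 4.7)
The plan is to follow the classical argument due to Pr\"ufer, Baer, and Kaplansky: construct a retraction $\pi : G \to H$, which yields $G = H \oplus \ker \pi$. The three main ingredients are primary decomposition, Pr\"ufer's first theorem on bounded abelian groups, and a Zorn's lemma extension argument using purity.

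First I reduce to the $p$-group case. Since $H$ is bounded, $H = \bigoplus_p H_p$ decomposes over the finitely many primes in its support, and purity of $H$ in $G$ transfers to each $H_p$ (the $p$-primary part of a witness to $n$-divisibility in $H$ for an element of $H_p$ lies in $H_p$ itself). Given retractions $\pi_p : G \to H_p$ for each such $p$, the finite sum $\pi := \sum_p \pi_p$ retracts $G$ onto $H$: for $p \neq q$ and $h_q \in H_q$, $\pi_p(h_q) \in H_p$ has both $p$-power and $q$-power order, hence vanishes. So I may assume $H$ is a $p$-group with $p^k H = 0$ for some minimal $k$, and Pr\"ufer's first theorem writes $H = \bigoplus_\alpha \langle h_\alpha \rangle$ with each $h_\alpha$ of order $p^{k_\alpha} \leq p^k$.

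I construct the retraction $\pi : G \to H$ via Zorn's lemma on pairs $(M, \phi)$ with $H \leq M \leq G$ and $\phi : M \to H$ extending the identity on $H$. A maximal such pair must satisfy $M = G$: given $x \in G \setminus M$, if $x$ has infinite order modulo $M$, extend by $\phi(x) := 0$. Otherwise let $n$ be minimal positive with $nx \in M$ and write $n = p^r m$ with $\gcd(m,p) = 1$; since multiplication by $m$ is invertible on the $p$-group $H$, after replacing $x$ by $mx$ I may assume $n = p^r$. If $\phi(p^r x) \in p^r H$, pick $h \in H$ with $p^r h = \phi(p^r x)$ and set $\phi(x) := h$. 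Otherwise, use purity of $H$ in $G$ to produce a substitute element $y = x - b$ (for a suitable $b \in H$) of finite order dividing $p^k$, intersecting $M$ trivially (by minimality of $p^r$), and on which $\phi$ can be consistently set to $0$, giving a strict extension and contradicting maximality.

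The main obstacle is the substitute-element construction: producing $b \in H$ so that $y = x - b$ dissolves the divisibility obstruction that prevented direct extension, while ensuring $\langle y \rangle \cap M = 0$ so that $\phi(y) := 0$ is consistent. Purity of $H$ in $G$ is precisely the tool that allows lifting of divisibility relations from $G$ back to $H$ (this is the content of Definition~\ref{def_pure}), and the fact that $H$ attains the exponent bound $p^k$ ensures that the $p$-height data in $H$ is rich enough for such $b$ to be found.
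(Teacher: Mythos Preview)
The paper does not prove this fact; it is stated with a citation to Kaplansky and used as a black box (the closely related statement that bounded groups are pure-injective is similarly quoted later as Fact~\ref{extending_gtoh}). So there is no proof in the paper to compare against, and I assess your argument on its own.

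Your reduction to the $p$-primary case and the Zorn setup are reasonable, but the ``otherwise'' branch has a genuine gap---one you yourself flag as ``the main obstacle'' without actually resolving. You seek $b \in H$ with $y = x - b$ satisfying $\langle y\rangle \cap M = 0$. Since $b \in H \subseteq M$, the minimal positive $n$ with $ny \in M$ is still $p^r$, so $\langle y\rangle \cap M = \langle p^r y\rangle$, and this vanishes exactly when $p^r b = p^r x$ in $G$. Purity of $H$ in $G$ would supply such a $b$ \emph{if} $p^r x$ lay in $H$, but you only know $p^r x \in M$. Equivalently, setting $\phi(y)=0$ consistently forces $\phi(p^r x) = \phi(p^r b) = p^r b \in p^r H$, which is precisely the case you excluded; the substitute-element trick is circular as written. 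The standard repair is not to handle this case but to rule it out by strengthening the Zorn hypothesis (e.g.\ restrict to pairs $(M,\phi)$ with $\phi(M \cap p^s G) \subseteq p^s H$ for all $s$, which $(H,\mrm{id}_H)$ satisfies by purity and under which the bad case never arises), or---more directly---to observe that a bounded group is pure-injective and extend $\mrm{id}_H$ along the pure embedding $H \hookrightarrow G$ in one step, which is essentially the route the paper takes via Fact~\ref{extending_gtoh}.
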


	\begin{fact}[{\cite[pg.~18, Theorem~8]{kaplansky}}]\label{kapla2} Let $G \in \mrm{AB}$ and $T \leq_* \mrm{Tor}(G)$. If $T$ is the direct sum of a divisible group and a group of bounded exponent, then $T$ is a direct summand of $G$.
\end{fact}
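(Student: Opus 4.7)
The plan is to peel off the divisible part first and then reduce to the bounded case. Write $T = D \oplus B$ with $D$ divisible and $B$ of bounded exponent. Since a divisible subgroup of any abelian group is automatically a direct summand, I would fix a decomposition $G = D \oplus H$ and let $\pi : G \to H$ denote the projection along $D$.

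Next I would relocate $T$ inside the new coordinates. Since $D \leq T$ and $D \cap H = 0$, one checks directly that $\pi(T) = T \cap H$, so $\pi(T)$ is actually contained in $T$, and $T = D \oplus \pi(T)$. Moreover $\pi(T) \cong T/D \cong B$ is of bounded exponent. Thus it suffices to show that $\pi(T)$ is pure in $H$, for then Fact~\ref{kapla1} gives a complement $H = \pi(T) \oplus K$, and
\[ G = D \oplus H = D \oplus \pi(T) \oplus K = T \oplus K, \]
which is the desired conclusion.

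The main obstacle, and the one step where the hypothesis $T \leq_* \mrm{Tor}(G)$ is genuinely used, is the purity check for $\pi(T)$ inside $H$. For this I would take $k \in \pi(T) \subseteq T$ and $h' \in H$ with $nh' = k$. Since $k$ is torsion, so is $h'$, giving $h' \in \mrm{Tor}(G)$. Purity of $T$ inside $\mrm{Tor}(G)$ then produces $t' \in T$ with $nt' = k$, and applying $\pi$ gives $n\pi(t') = \pi(k) = k$ with $\pi(t') \in \pi(T)$, establishing purity. Everything else reduces to routine direct-sum bookkeeping; the only substantive content is this transfer of purity from $\mrm{Tor}(G)$ to $H$ via $\pi$.
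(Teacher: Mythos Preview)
Your argument is correct. The decomposition $G = D \oplus H$, the identification $\pi(T) = T \cap H$, the purity check (where you correctly observe that $nh' = k$ with $k$ torsion forces $h'$ to be torsion, so that the hypothesis $T \leq_* \mrm{Tor}(G)$ applies), and the final appeal to Fact~\ref{kapla1} all go through as you describe.

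Note, however, that the paper does not prove this statement at all: it is recorded as a \emph{Fact} with a citation to Kaplansky's book \cite[pg.~18, Theorem~8]{kaplansky}, and no argument is given. So there is no ``paper's own proof'' to compare against. Your proof is essentially the standard one (and indeed close to Kaplansky's): split off the divisible part, then reduce to the bounded-pure case and invoke the preceding theorem. There is nothing to correct.
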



	\begin{fact}[\cite{b&p}]\label{co-hop_pgroup} Let $G \in \mrm{AB}$ be a countable $p$-group. Then $G$ is co-Hopfian if and only if $G$ is finite.
\end{fact}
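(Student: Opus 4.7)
The backward direction is immediate: an injective self-map of a finite set is automatically a bijection, so finite groups are co-Hopfian. For the forward direction I would argue the contrapositive, producing for each infinite countable $p$-group $G$ an injective non-surjective endomorphism. First decompose $G = D \oplus R$ with $D$ the maximal divisible subgroup and $R$ reduced. A countable divisible $p$-group has the form $\bigoplus_{i \in I}\mathbb{Z}(p^\infty)$ with $|I| \leq \aleph_0$, so if $D$ is infinite the standard right shift on the countably many $\mathbb{Z}(p^\infty)$-summands, extended by the identity on $R$, is the required endomorphism; since any nonzero Pr\"ufer summand is already infinite, it remains to treat $G$ infinite and reduced.

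So assume $G = R$ is reduced and infinite, and pick a basic subgroup $B = \bigoplus_{n \geq 1}B_n$, with each $B_n$ a direct sum of copies of $\mathbb{Z}/p^n\mathbb{Z}$, $B$ pure in $G$, and $G/B$ divisible. I first observe $B$ is infinite: otherwise $B$ is bounded and, by Fact~\ref{kapla1}, a direct summand of $G$, so reducedness forces $G/B = 0$ and $G = B$ finite --- a contradiction. If $G$ itself is bounded, Pr\"ufer's first theorem expresses $G$ as a direct sum of cyclic $p$-groups and pigeonhole yields a summand $(\mathbb{Z}/p^k\mathbb{Z})^{(\omega)}$ on which a right shift is the desired endomorphism. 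More generally, if some $B_n$ is infinite then Fact~\ref{kapla1} makes $B_n$ itself a direct summand of $G$ and the same shift trick applies. The remaining subcase is that $G$ is unbounded, every $B_n$ is finite, and infinitely many $B_n$ are nonzero; here my plan is to select cyclic summands $\langle b_k\rangle \leq B$ of strictly increasing orders $p^{n_k}$ and define an injective $\phi \colon B \to B$ by $b_k \mapsto p^{n_{k+1}-n_k}\,b_{k+1}$, arranging that the image of $\phi$ is contained in $pB$.

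The main obstacle is then extending $\phi$ from $B$ to an injective endomorphism of $G$ whose image is still a proper subgroup. Since $G$ is only an essential --- possibly non-split --- extension of $B$ by the divisible quotient $G/B$, an arbitrary endomorphism of $B$ need not extend; the plan is to exploit the $p$-divisibility of $G/B$ together with the purity of $B$ to lift $\phi$ coset by coset via a compatible choice of $p$-power roots, or alternatively to invoke Ulm's classification theorem for countable reduced $p$-groups to match $G$ with a proper subgroup having the same Ulm invariants. Either route produces an injective endomorphism whose image is contained in the proper subgroup $pG \subsetneq G$ (proper because a nontrivial reduced $p$-group cannot be $p$-divisible), witnessing that $G$ is not co-Hopfian.
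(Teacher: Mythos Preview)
The paper does not prove this fact; it is cited from \cite{b&p}, though the reduced case is reproved (and generalized) in Claim~\ref{the_crucial_claim}. Your argument has a genuine gap already in the divisible case, and in fact the statement as literally written is false: if $D \cong \mathbb{Z}(p^\infty)$ (a single Pr\"ufer summand) and $R$ is finite, then $G$ \emph{is} co-Hopfian, since every injective endomorphism of $\mathbb{Z}(p^\infty)$ is an automorphism (its endomorphism ring is the $p$-adic integers, and a nonunit acts with nontrivial kernel); the same holds for any finite number of Pr\"ufer summands plus a finite reduced part. Your ``right shift on the countably many $\mathbb{Z}(p^\infty)$-summands'' silently assumes $|I| = \aleph_0$, whereas $D$ infinite only forces $|I| \geq 1$. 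The intended hypothesis is \emph{reduced}, as in Theorem~\ref{main_th1} and Claim~\ref{the_crucial_claim}.

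In the reduced unbounded case, the extension from the basic subgroup $B$ to $G$ is the entire difficulty, and neither of your proposed routes is carried out. The Ulm idea would need a proper subgroup with \emph{identical} Ulm invariants, and ``image $\subseteq pG$'' does not provide this: the Ulm invariants of $pG$ are shifted from those of $G$, so in general $pG \not\cong G$; the coset-lifting is only a hope. By contrast, Claim~\ref{the_crucial_claim} avoids basic subgroups and Ulm altogether: it splits off cyclic direct summands $L_n$ of strictly increasing order $p^{\ell(n)}$, uses a pigeonhole argument to build endomorphisms $f_i \in \mrm{Hom}(G, L_{n(i)})$ each vanishing on a prescribed finite set $\{a_0,\dots,a_i\}$ enumerating $G$, and takes $f = \sum_i f_i$ (a locally finite sum); then $\mrm{id}_G - f$ witnesses non-co-Hopfianity via Observation~\ref{suff_cond}.
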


	\begin{fact}[{\cite[Theorem~17.2]{fuch_vol1}}]\label{bounded_exponent} If $A \in \mrm{AB}$ is a $p$-group of bounded exponent, then $A$ is a direct sum of (finitely many, up to isomorphism) finite cyclic groups.
\end{fact}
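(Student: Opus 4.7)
The plan is to prove the result by induction on the least positive integer $n$ with $p^n A = 0$. The base case $n = 1$ is immediate: $A$ is then an $\mathbb{F}_p$-vector space, and any basis exhibits $A$ as a direct sum of copies of $\mathbb{Z}/p\mathbb{Z}$.

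For the inductive step, the strategy is to split off a pure direct summand of $A$ consisting entirely of cyclic summands of the maximal order $p^n$. By Zorn's lemma, choose $X \subseteq A$ maximal among subsets whose members all have order exactly $p^n$ and whose cyclic subgroups $\{\langle x \rangle : x \in X\}$ are independent in $A$. Put $B = \bigoplus_{x \in X} \langle x \rangle \leq A$, so $B \cong \bigoplus_{X} \mathbb{Z}/p^n\mathbb{Z}$. Granting that $B$ is pure in $A$, Fact~\ref{kapla1} applies, since $B$ has bounded exponent, and yields a splitting $A = B \oplus C$. The complement $C$ must then have exponent at most $p^{n-1}$: otherwise an element of $C$ of order $p^n$ would be independent from $B$, hence from each $\langle x \rangle$ with $x \in X$, contradicting the maximality of $X$. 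Applying the induction hypothesis to $C$ completes the decomposition of $A$ as a direct sum of cyclic $p$-groups; since the orders of the summands lie in the finite set $\{p, p^2, \ldots, p^n\}$, only finitely many isomorphism types appear.

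The main obstacle, and the only genuinely nontrivial step, is verifying the purity of $B$ — this is the content of the classical Pr\"ufer--Baer lemma. I would argue by contradiction: suppose purity fails and choose $a \in A$ witnessing the failure with minimal order, so that $p^j a \in B$ but $p^j a \notin p^j B$ for some $j$. Using that every generator of $B$ has maximal order $p^n$, one adjusts $a$ by a suitable element of $B$ to produce $a' \in a + B$ of order exactly $p^n$ whose cyclic subgroup is independent from those generated by $X$, directly contradicting maximality of $X$. The delicate point is to simultaneously match the height of $p^j a$ inside $B$ and force $a'$ to have full order $p^n$; this is where the bounded-exponent hypothesis is essential, as it provides a genuine ceiling $p^n$ on orders which any fresh element can always be made to saturate. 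Once purity of $B$ is in hand, the rest is the formal reduction by Fact~\ref{kapla1} and induction described above.
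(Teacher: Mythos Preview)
The paper does not supply a proof of this statement; it is recorded as a Fact with a bare citation to \cite[Theorem~17.2]{fuch_vol1} and used as a black box. So there is no ``paper's own proof'' to compare your attempt against.

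On its own merits, your outline is a correct version of a standard argument: induction on the exponent $p^n$, the base case handled by linear algebra over $\mathbb{F}_p$, and the inductive step executed by splitting off a maximal direct sum $B$ of cyclics of order $p^n$. You correctly identify the purity of $B$ in $A$ as the only substantive step (the Pr\"ufer lemma), and your sketch of its proof --- minimal counterexample, adjust by an element of $B$ to saturate the order and force independence from $X$ --- is along the right lines, though the omitted details are where all the work sits.

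One caution on logical dependence: you invoke Fact~\ref{kapla1} (a pure bounded subgroup is a direct summand) to pass from purity of $B$ to $A = B \oplus C$. In some developments Fact~\ref{kapla1} is itself deduced from the present structure theorem, which would make your argument circular. Within this paper both facts are imported independently from standard references, and Kaplansky's proof of his Theorem~7 proceeds via pure-injectivity rather than via Pr\"ufer--Baer, so no circularity actually arises here; but a fully self-contained proof would normally establish the splitting $A = B \oplus C$ directly inside the purity argument rather than appeal to~\ref{kapla1}.
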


	\begin{fact}\label{fact6A} Let $G \in \mrm{AB}$ and $p \in \mathbb{P}$.
	\begin{enumerate}[(1)]
	\item If $G$ is an unbounded $p$-group, then $G$ has a pure cyclic subgroup of arbitrarily large finite size.
	\item $\mrm{Tor}_p(G) \leq_* \mrm{Tor}(G) \leq_* G$ (for $\leq_*$ cf. Definition~\ref{def_pure}).
\end{enumerate}
\end{fact}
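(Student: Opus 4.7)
The plan is to treat the two parts separately, starting with the quicker part~(2). For $\mrm{Tor}(G) \leq_* G$: given $k \in \mrm{Tor}(G)$ of order $m$ and $g \in G$ with $ng = k$, one has $mng = mk = 0$, so $g$ is itself torsion and hence serves as its own witness in $\mrm{Tor}(G)$. For $\mrm{Tor}_p(G) \leq_* \mrm{Tor}(G)$: given $k \in \mrm{Tor}_p(G)$ and $g \in \mrm{Tor}(G)$ with $ng = k$, I use the primary decomposition $g = g_p + g'$, with $g_p \in \mrm{Tor}_p(G)$ and $g' \in \bigoplus_{q \in \mathbb{P},\, q \neq p} \mrm{Tor}_q(G)$. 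Then $ng_p \in \mrm{Tor}_p(G)$ and $ng' \in \bigoplus_{q \neq p} \mrm{Tor}_q(G)$ together add up to $k \in \mrm{Tor}_p(G)$, so uniqueness of primary decomposition forces $ng_p = k$, and $g_p$ witnesses purity.

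For part~(1), the plan is to invoke Kulikov's theorem on the existence of basic subgroups. I first reduce to the case $G$ reduced by splitting off the maximal divisible summand of $G$ (a direct sum of Pr\"ufer groups, which contributes no non-trivial pure cyclic subgroups), this being the case relevant for the later applications. So assume $G$ is reduced and unbounded, and let $B = \bigoplus_{n \geq 1} B_n$ be a basic subgroup of $G$, where each $B_n$ is a direct sum of cyclic groups of order $p^n$, $B \leq_* G$, and $G/B$ is divisible.

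The key step is to show $B$ is unbounded. If not, $B$ would have bounded exponent and so, by Fact~\ref{kapla1}, would be a direct summand of $G$: write $G = B \oplus B'$ with $B' \cong G/B$ divisible; since $G$ is reduced, $B' = 0$, hence $G = B$ is bounded, contradicting the hypothesis. Therefore infinitely many $B_n$ are non-zero, and each such $B_n$ supplies a cyclic direct summand $C$ of $B$ of order $p^n$. Purity is transitive through pure inclusions and a direct summand is always pure, so $C \leq_* B \leq_* G$ produces a pure cyclic subgroup of $G$ of order $p^n$; letting $n$ vary delivers pure cyclic subgroups of arbitrarily large finite order. The main obstacle is the appeal to the existence of a basic subgroup, a classical but non-trivial result of Kulikov; once it is available, the remaining arguments (the step $B$ unbounded via Fact~\ref{kapla1}, and the transfer of purity from $B$ to $G$) are direct.
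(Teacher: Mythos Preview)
Your proof is correct and supplies the details the paper omits: the paper's own proof reads in full ``(1) follows by \ref{kapla1} and (2) is well-known.'' Your verification of~(2) via the primary decomposition is the standard argument. For~(1), your route through Kulikov's basic subgroup theorem, with Fact~\ref{kapla1} invoked to show that a bounded basic subgroup would split off and force $G$ itself to be bounded, is a clean and standard way to realise the paper's one-word hint; the paper gives no further indication of which argument it intends, and yours is the natural one.

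One remark on the reduction step. You are right to flag the passage to the reduced case: as literally stated, item~(1) fails for $G = \mathbb{Z}(p^\infty)$, which is an unbounded $p$-group with no nonzero pure cyclic subgroup, so an implicit ``reduced'' hypothesis is needed --- and indeed the paper only invokes this fact inside proofs where $G$ is already assumed reduced. Your caveat ``this being the case relevant for the later applications'' is exactly the right attitude; just note that the reduction you describe does not by itself preserve unboundedness (if $G = D \oplus R$ with $D$ divisible nonzero and $R$ bounded, the argument stalls after splitting), so the hypothesis should really be read as ``reduced and unbounded'' from the outset rather than reached by peeling off~$D$.
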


	\begin{proof} (1) follows by \ref{kapla1} and (2) is well-known.
\end{proof}


	\begin{cclaim}\label{c7} Let $G \in \mrm{AB}$. Then:
	\begin{enumerate}[(1)]
	\item\label{c7(1)} If $G = G_1 \oplus G_2$ and $G_1$ is not co-Hopfian (resp. Hopfian), then $G$ is not co-Hopfian (resp. Hopfian); 
	\item If $G \in \mrm{TFAB}$, then $G$ is co-Hopfian iff $G$ is divisible of finite rank;
	\item If $G = G_1 \oplus G_2$ and $G_1 \neq \{0\} \neq G_2$, then $G$ has a non-trivial automorphism.
	\end{enumerate}
\end{cclaim}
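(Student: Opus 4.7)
The plan is to handle the three parts independently, each by exhibiting the relevant endomorphism or automorphism. For (1), I would lift a failure from $G_1$ to $G$ by padding with the identity: if $f \in \mrm{End}(G_1)$ witnesses the failure of co-Hopfianness (resp.\ Hopfianness) of $G_1$, then $\hat f := f \oplus \mrm{id}_{G_2}$ is injective (resp.\ surjective) iff $f$ is, while its image $f(G_1)\oplus G_2$ is a proper subgroup of $G$ (resp.\ its kernel $\ker f \oplus \{0\}$ is non-zero); thus $\hat f$ witnesses that $G$ is not co-Hopfian (resp.\ Hopfian).

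For (2), one direction is standard linear algebra: if $G \cong \mathbb{Q}^n$ with $n < \omega$, then every endomorphism of $G$ is automatically $\mathbb{Q}$-linear, and an injective endomorphism of a finite-dimensional $\mathbb{Q}$-vector space is surjective. Conversely, if $G \in \mrm{TFAB}$ is co-Hopfian, then for each prime $p$ the map $x \mapsto px$ is injective by torsion-freeness, hence onto by co-Hopfianness, so $G$ is divisible; being also torsion-free, $G \cong \mathbb{Q}^{(\kappa)}$ for some cardinal $\kappa$. If $\kappa$ were infinite, then for any fixed $i_0 \in \kappa$ a bijection $\kappa \to \kappa \setminus \{i_0\}$ would extend by $\mathbb{Q}$-linearity to an injective endomorphism of $G$ with proper image $\mathbb{Q}^{(\kappa\setminus\{i_0\})}$, contradicting co-Hopfianness; hence $\kappa$ is finite.

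For (3), my first candidate is $\phi(g_1,g_2) := (-g_1, g_2)$, an automorphism of $G$ which is non-trivial as soon as some $g_1 \in G_1$ satisfies $2 g_1 \neq 0$; by symmetry $\psi(g_1,g_2):=(g_1,-g_2)$ handles the case $2 G_2 \neq 0$. The remaining case is $2 G_1 = 2 G_2 = 0$, so $G_1, G_2$ and hence $G$ are $\mathbb{F}_2$-vector spaces with $\dim_{\mathbb{F}_2} G \geq 2$; picking $e_i \in G_i^+$, the span $\langle e_1, e_2 \rangle$ is a $2$-dimensional summand $G = \langle e_1, e_2 \rangle \oplus K$, and swapping $e_1 \leftrightarrow e_2$ while fixing $K$ gives a non-trivial automorphism. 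I anticipate no real obstacle; the only subtlety is this characteristic-$2$ subcase of (3).
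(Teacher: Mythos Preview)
Your proof is correct in all three parts. The paper itself does not actually prove this claim: its entire argument is ``Each item is either easy or well-known, see e.g.\ \cite{b&p}.'' So your explicit constructions (the padded endomorphism $f\oplus\mrm{id}_{G_2}$ for (1), the divisibility-then-rank argument for (2), and the sign-flip/basis-swap dichotomy for (3)) supply details the paper omits, and there is nothing to compare on the level of strategy.
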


	\begin{proof} Each item is either easy or well-known, see e.g. \cite{b&p}.
	\end{proof}
	
	\begin{fact}\label{extending_gtoh} Let $K \in \mrm{AB}$ be a bounded torsion group and let $G \leq_* H \in \mrm{AB}$. If $g \in \mrm{Hom}(G, K)$, then there is $h \in \mrm{Hom}(H, K)$ extending $g$.
\end{fact}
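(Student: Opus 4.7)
The plan is to exploit the fact that $K$ has bounded exponent in order to reduce the extension problem modulo a fixed integer, and then invoke Fact~\ref{kapla1}. Fix $n < \omega$ with $nK = \{0\}$. Since $g(nG) \subseteq nK = \{0\}$, the map $g$ factors through $G/nG$; let $\bar g : G/nG \to K$ be the induced homomorphism. The goal becomes: extend $\bar g$ to a homomorphism $H/nH \to K$, since then composing with the quotient $H \to H/nH$ produces the desired $h \in \mrm{Hom}(H, K)$.

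First I would check that the natural map $G/nG \to H/nH$ is injective. If $x \in G$ and $x \in nH$, write $x = ny$ with $y \in H$; by purity of $G$ in $H$ (Observation/Definition~\ref{def_pure}) there is $y' \in G$ with $x = ny'$, so $x \in nG$. Hence $G \cap nH = nG$ and the induced map is an embedding, and we may regard $G/nG$ as a subgroup of $H/nH$.

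The key step is to show that $G/nG$ is pure in $H/nH$. Since $H/nH$ has exponent dividing $n$, multiplication by any $m$ coprime to $n$ is an isomorphism on $H/nH$, so it suffices to check divisibility by $m$ with $m \mid n$. Suppose $\bar x \in G/nG$ with $x \in G$, and $\bar x = m \bar y$ in $H/nH$, i.e., $x = my + nh$ in $H$ for some $y, h \in H$. Writing $n = m n'$, we get $x = m(y + n' h)$, and then purity of $G$ in $H$ gives some $z \in G$ with $x = mz$, so $\bar x = m \bar z$ in $G/nG$. This establishes $G/nG \leq_* H/nH$.

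Now $G/nG$ is pure in $H/nH$ and has bounded exponent (dividing $n$), so Fact~\ref{kapla1} applies to give a direct sum decomposition $H/nH = G/nG \oplus L$ for some $L \leq H/nH$. Let $\pi : H/nH \to G/nG$ be the projection, and define $h : H \to K$ by $h = \bar g \circ \pi \circ q$, where $q : H \to H/nH$ is the quotient. For $x \in G$ we have $q(x) = \bar x \in G/nG$, $\pi(\bar x) = \bar x$, and $\bar g(\bar x) = g(x)$, so $h$ extends $g$, as required. The main obstacle is precisely the purity check for $G/nG$ inside $H/nH$; everything else is bookkeeping once that is in place.
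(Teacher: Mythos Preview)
Your proof is correct and takes a genuinely different route from the paper's. The paper simply observes that a bounded abelian group $K$ is algebraically compact (equivalently, pure-injective), citing Fuchs; pure-injectivity of the target $K$ is by definition exactly the extension property being asserted. Your argument instead works on the \emph{source} side: you reduce modulo the exponent $n$ of $K$, verify that $G/nG$ embeds purely in $H/nH$, and then invoke Fact~\ref{kapla1} to split $G/nG$ off as a direct summand of $H/nH$, extending $\bar g$ by zero on the complement.

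What each approach buys: the paper's argument is a one-line citation but leans on the external theory of algebraic compactness. Your argument is more self-contained relative to the facts already assembled in the paper (only Fact~\ref{kapla1} is used), and in effect reproves by hand the special case of pure-injectivity that is needed. One small remark: your reduction ``it suffices to check $m \mid n$'' glosses over the case where $m$ is neither coprime to $n$ nor a divisor of $n$; this is harmless, since in any group $A$ of exponent $n$ one has $mA = \gcd(m,n)\,A$ (write $\gcd(m,n) = am + bn$ and note $dx = amx + bnx = m(ax)$), so divisibility by $m$ coincides with divisibility by $\gcd(m,n)$.
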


	\begin{proof} This is because, by Fact~\ref{kapla1}, $K$ is algebraically compact (cf. \cite[Section~38]{fuch_vol1}) and such groups are exactly the pure-injective groups in $\mrm{AB}$ (see \cite[Theorem~38.1]{fuch_vol1}).
\end{proof}
	
	\begin{observation}\label{suff_cond} Let $G \in \mrm{AB}$. Then $G$ is non-co-Hopfian if and only if:
	\begin{enumerate}[$(\star)$]
	\item there are $f$ and $z  \in G$ such that:
	\begin{enumerate}[(a)]
	\item $f \in \mrm{End}(G)$;
	\item $f(x) \neq x$ for every $x \in G \setminus \{ 0 \}$;
	\item for every $x \in G$, $z \neq x - f(x)$.
	\end{enumerate}
	\end{enumerate}
\end{observation}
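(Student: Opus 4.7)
The plan is to recognize that the observation is a straightforward reformulation of the definition of non-co-Hopfianness, obtained via the involution $\varphi \mapsto \mathrm{id}_G - \varphi$ of $\mathrm{End}(G)$. This involution is well-defined because $G$ is abelian, so $\mathrm{End}(G)$ is a ring under pointwise addition and, in particular, closed under pointwise subtraction. In one direction I would start from an arbitrary $f \in \mrm{End}(G)$ and set $\varphi := \mathrm{id}_G - f \in \mrm{End}(G)$; in the other direction, given an injective non-surjective $\varphi \in \mrm{End}(G)$ witnessing non-co-Hopfianness, I would set $f := \mathrm{id}_G - \varphi \in \mrm{End}(G)$.

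The content then reduces to unwinding the definitions. Condition (b), namely $f(x) \neq x$ for every $x \in G \setminus \{0\}$, is equivalent to $\varphi(x) = x - f(x) \neq 0$ for every such $x$, i.e., to $\ker(\varphi) = \{0\}$, which is injectivity of $\varphi$. Condition (c), the existence of $z \in G$ with $z \neq x - f(x)$ for all $x \in G$, says precisely that $z \notin \mathrm{im}(\varphi)$, i.e., $\varphi$ is not surjective. Combining these, $(\star)$ holds iff there is an injective, non-surjective $\varphi \in \mrm{End}(G)$, which is the definition of $G$ being non-co-Hopfian.

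There is no real obstacle; the only ``step'' is the bijection $f \leftrightarrow \mathrm{id}_G - f$ and its effect on the properties ``kernel trivial'' and ``image proper''. The purpose of stating the equivalence in the form $(\star)$ is pragmatic: when constructing a failure of co-Hopfianness in later arguments, it is often more convenient to produce a map $f$ that is close to the identity (so that $b$-type equations $f(x) = x$ can be treated as cancellation constraints) together with a specific element $z$ outside $\mathrm{im}(\mathrm{id}_G - f)$, rather than to directly exhibit an injective non-surjective endomorphism.
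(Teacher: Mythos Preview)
Your proof is correct and matches the paper's own argument: the paper likewise sets $g = \mathrm{id}_G - f$ for the right-to-left direction and $f = \mathrm{id}_G - g$ for the converse, observing that (b) translates to injectivity and (c) to non-surjectivity of $g$. Your exposition is slightly more detailed but the content is identical.
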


	\begin{proof} For the direction right-to-left, notice that letting $g = id_G - f \in \mrm{End}(G)$ we have that by (b) $g$ is $1$-to-$1$ and by (c) $g$ is not onto. The other direction is also easy as if $g$ is a witness for non-co-Hopfianity of $G$, then $\mrm{id}_G - g$ satisfies $(\star)$.
\end{proof}

	The following notation will be relevant in Section~\ref{last_sec}.

\begin{notation}\label{inf_logic_notation} By $\tau_{\mrm{AB}}$ we denote the vocabulary of abelian groups $\{+, -, 0\}$. Given $\lambda, \kappa \in \mrm{Card}$ we denote by $\mathfrak{L}_{\lambda, \kappa}(\tau_{\mrm{AB}})$ the corresponding infinitary $\tau_{\mrm{AB}}$-formulas (see e.g. \cite{dickman}). Sometimes we simply write $\varphi \in \mathfrak{L}_{\kappa, \lambda}$ instead of $\varphi \in \mathfrak{L}_{\kappa, \lambda}(\tau_{\mrm{AB}})$.
\end{notation}

	We now introduce the cardinal invariant $\mathfrak{p}$ (which occurs in Theorem~\ref{main_th1}).
	
	\begin{definition} The cardinal invariant of the continuum $\mathfrak{p}$ is the minimum size of a family $\mathcal{F}$ of infinite subsets of $\omega$ such that:
	\begin{enumerate}[(i)]
	\item every non-empty finite subfamily of $\mathcal{F}$ has infinite intersection;
	\item there is no infinite $A \subseteq \omega$ s.t., for every $B \in \mathcal{F}$, $\{x \in A : x \notin B \}$ is finite.
	\end{enumerate}
\end{definition}

	\section{Co-Hopfian abelian groups of size $\aleph_0 < \lambda < 2^{\aleph_0}$}\label{sec_cohop_char}

	As mentioned in the introduction, in this section we aim at proving Theorem~\ref{main_th1}, to this extent we first prove Claim~ \ref{the_crucial_claim} which deals with the countable case and then detail on how to modify the proof in order to get Claim~\ref{the_crucial_claim_post} which gives Theorem~\ref{main_th1}.

	\begin{remark} By \ref{c7}, the assumption ``$G$ is reduced'' is without loss of generality. This applies e.g. to Claim~\ref{the_crucial_claim} below.
\end{remark}
	
	\begin{cclaim}\label{the_crucial_claim} Let $G \in \mrm{AB}$ be countable and reduced. Let also $p \in \mathbb{P}$, and suppose that $\mrm{Tor}_p(G)$ is infinite. Then:
	\begin{enumerate}[(1)]
	\item $G$ is not co-Hopfian;
	\item If in addition $\mrm{Tor}_p(G)$ is not bounded, then we can find $\bar{K}$ and $K$ such that:
	\begin{enumerate}
	\item $\bar{K} = (K_n : n < \omega)$ and $K = \bigoplus_{n < \omega} K_n \leq_* G$;
	\item $K_n \leq G$ is a non-trivial finite $p$-group;
	\item there is $f \in \mrm{End}(G)$ such that $\mrm{ran}(f) \subseteq K$ and for every $n < \omega$ we have that $\{0\} \neq f(K_n) \subseteq K_n$;
	\item $f$ is as in \ref{suff_cond}.
	\end{enumerate}
	\item If in addition to (2) $\mrm{Tor}_p(G)$ has height $\geq \omega$, then in (2)(b) we have that for some increasing $k(n)$,  $p^n(p^{k(n)}K_n) \neq \{ 0 \}$ and $x \in K_n \Rightarrow f(x) = p^{k(n)}(x)$.
	\end{enumerate}
\end{cclaim}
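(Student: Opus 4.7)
The argument splits according to whether $T := \mrm{Tor}_p(G)$ is bounded. In the bounded case only (1) is needed: by Fact~\ref{bounded_exponent}, $T$ is a direct sum of cyclic $p$-groups taken from finitely many isomorphism types, and pigeonhole yields a summand $M \cong \bigoplus_{i<\omega}\mathbb{Z}/p^k\mathbb{Z}$ of $T$; since $T\leq_* G$ (Fact~\ref{fact6A}(2)) and $T$ is bounded, Fact~\ref{kapla1} promotes $T$, and so $M$, to a direct summand of $G$. The right-shift on $M$ is an injective, non-surjective endomorphism, and Claim~\ref{c7}(\ref{c7(1)}) transfers non-co-Hopfianity from $M$ to $G$.

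For the unbounded case I first construct $\bar K$. Since $T$ is reduced and unbounded, its basic subgroup $B = \bigoplus_n B_n$ (with $B_n$ a direct sum of cyclics of order $p^n$) is itself unbounded---otherwise Fact~\ref{kapla1} would split $T = B\oplus C$ with $C$ a homomorphic image of the divisible $T/B$, hence divisible and, by reducedness, zero, forcing $T$ bounded. From infinitely many nontrivial $B_n$'s I pick cyclic direct summands $K_n$ of order $p^{m_n}$ with $m_n$ strictly increasing, so that $K := \bigoplus_n K_n$ is a summand of $B$, and purity propagates along $B\leq_* T \leq_* G$ by Fact~\ref{fact6A}(2) and transitivity of $\leq_*$. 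For (3), under the height-$\geq\omega$ hypothesis, I would additionally arrange the generator of each $K_n$ to have $p$-height $\geq k(n)$ in $T$ with $k(n)$ strictly increasing and $p^n(p^{k(n)}K_n)\neq 0$, by harvesting elements from iterated $p$-divisible layers of $T$.

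The endomorphism $f$ is then built by prescribing $f|_{K_n}$ as multiplication by $p^{k(n)}$ on $K_n$ (which gives $\{0\} \neq f(K_n)\subseteq K_n$ and matches (3)), and extending to $G$ via Fact~\ref{extending_gtoh}: each coordinate map $p^{k(n)}\pi_n : K \to K_n$ extends (because $K_n$ is bounded and $K \leq_* G$) to $\tilde f_n : G \to K_n$, and these are assembled coherently along a filtration of the countable $G$ by finite-rank pure subgroups so that $f := \sum_n \tilde f_n$ has finite support on each $x\in G$, defining an element of $\mrm{End}(G)$ with $\mrm{ran}(f) \subseteq K$.

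The main obstacle is the verification of Observation~\ref{suff_cond} for this $f$: that $\mrm{id}_G - f$ is injective yet misses some $z \in G$. Injectivity reduces, via $\mrm{ran}(f)\subseteq K$, to injectivity of $\mrm{id}_K - f|_K$, which on each $K_n$ is multiplication by the unit $1-p^{k(n)}\in(\mathbb{Z}/p^{m_n}\mathbb{Z})^\times$, hence injective. Non-surjectivity is the genuinely delicate point: naively the diagonal form of $f|_K$ turns $\mrm{id}_K - f|_K$ into an automorphism of $K$, which alone would already force $\mrm{id}_G - f$ to be surjective, so the real content of the construction must lie in the specific choice of the extensions $\tilde f_n$ paired with a well-chosen witness $z \in G$ supplied by the height data of (3). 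The $p$-adic height profile of $z$ should obstruct any $x$ with $x - f(x) = z$, and pinning down $z$ and checking the incompatibility of its height with the heights attainable by $\mrm{id}_G - f$ is where I expect the bulk of the technical work to go.
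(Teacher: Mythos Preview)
Your bounded case and your construction of $\bar K$ are fine, and you correctly isolate the two real difficulties; but in both places the proposal stops short of the actual mechanism, and in the second place your suggested fix cannot work.

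\textbf{Finite support of $\sum_n \tilde f_n$.} Invoking Fact~\ref{extending_gtoh} gives you \emph{some} extension $\tilde f_n:G\to K_n$, but no control over its values outside $K$; there is no reason a generic pure-injective extension should vanish on a prescribed finite set, so ``assembled coherently along a filtration'' is not an argument. The paper's device here is a pigeonhole/subtraction trick: from the splittings $G=H_{n+1}\oplus L_n$ one has projections $h_n:G\twoheadrightarrow L_n$, and for any finite $A\subseteq G$ one finds $n_1<n_2$ with the $A$-images of (rescalings of) $h_{n_1},h_{n_2}$ matching, so that a suitable difference $h'_{(n_2,n_0)}-f'_{(n_2,n_1)}h'_{(n_1,n_0)}$ kills $A$ while remaining nonzero on $y_{n_2}$. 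Enumerating $G=\{a_i:i<\omega\}$ and choosing $f_i$ at stage $i$ to kill $\{a_0,\dots,a_i\}$ is what makes $\sum_i f_i$ well-defined. Your outline never produces such an $A$-killing map.

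\textbf{Non-surjectivity.} Your observation is stronger than you acknowledge: if $\mathrm{ran}(f)\subseteq K$ and $(\mathrm{id}-f)\restriction K$ is bijective, then $\mathrm{id}_G-f$ is surjective \emph{unconditionally}. Indeed, given $y\in G$ solve $k-f(k)=f(y)\in K$ and set $x=y+k$. No choice of extensions $\tilde f_n$ and no $p$-height profile of a candidate $z$ can obstruct this algebraic identity, so the ``height data'' route is a dead end. The paper's remedy is structural, not metric: the bottom block $K_0$ is taken to be $L_0\oplus L_1$ (rank two rather than cyclic) and $f_0$ is a hand-built nilpotent shift $m_0y_0+m_1y_1\mapsto p^{\ell(1)-\ell(0)}m_0y_1$ on $K_0$, with the witness $z$ living there; the later $f_i$ (for $i>0$) are the diagonal $p$-power maps you describe and also send $K_0$ to $0$. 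In other words, clause~(3) is only meant to govern the tail $K_n$ for $n\geq 1$, while $K_0$ carries the non-surjectivity. Your uniformly diagonal $f$ has no such asymmetric bottom block, which is exactly why you could not locate $z$.
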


	\begin{proof} If $\mrm{Tor}_p(G)$ is a bounded infinite group, then by Fact~\ref{bounded_exponent} we have part (1). So assume that $\mrm{Tor}_p(G)$ is infinite and not bounded, we prove Items (2) and (3) simultaneously, as Item (2) implies (1) by Observation~\ref{suff_cond}, recalling (1)(d), this suffices. As $\mrm{Tor}_p(G)$ is infinite and not bounded, we can choose $(L_n, H_{n+1}, y_n)$ s.t.:
	\begin{enumerate}[$(*_1)$]
	\item
	\begin{enumerate}[(a)]
	\item $H_0 = G$;
	\item $H_{n} = H_{n+1} \oplus L_n$;
	\item $L_n = \mathbb{Z} y_n$ and $y_n$ has order $p^{\ell(n)}$, so $L_n, H_{n+1}, H_n \leq_* G$;
	\item without loss of generality $\ell(n) \leq \ell(n+1)$;
	\item moreover $\ell(n) < \ell(n+1)$.
	\end{enumerate}
	\end{enumerate}
	[Why we can do this? By induction on $n < \omega$, using Facts~\ref{kapla2} and \ref{fact6A}.]
	\begin{enumerate}[$(*_2)$]
	\item We can find $f_0 \in \mrm{End}(G)$ such that:
	\begin{enumerate}
	\item $f_0$ maps $H_2$ into itself;
	\item $f_0$ maps $L_0 \oplus L_1$ into itself;
	\item for some $z \in L_0 \oplus L_1$, $z \notin \{ x - f_0(x) : x \in L_0 \oplus L_1 \}$;
	\item $x \in L_0 \oplus L_1$ implies $x \neq f_0(x)$.
\end{enumerate}	
\end{enumerate}
[Why? First, $G = H_2 \oplus L_1 \oplus L_0$. Now, let $f_0 \restriction H_2$ be $0$ and $f_0 \restriction L_0 \oplus L_1$ be defined by $f_0(m_0 y_0 + m_1y_1) = p^{\ell(1)-\ell(0)} m_0 y_1$. Then $f_0$ is as wanted letting $z = y_0$.]
\begin{enumerate}[$(*_3)$]
	\item if $A \subseteq G$ is finite and $n_0 < \omega$, then we can find $n_2 > n_0$ and $h$ such that:
	\begin{enumerate}[(a)]
	\item $h$ is an hom. from $G$ onto $\mathbb{Z}(p^{\ell(n_2) - \ell(n_0)} y_{n_2})$ (cyclic grp. of order $p^{\ell(n_0)}$);
	\item $h(a) = 0$, for $a \in A$;
	\item $h(y_{n_2}) = p^{\ell(n_2) - \ell(n_0)} y_{n_2}$
	\item $n_2 - n_0 \leq (p^{\ell(n_0)})^{|A|}$;
	\item $h(y_\ell) = 0$, for $\ell < n_0$.
\end{enumerate}
\end{enumerate}
[Why? By $(*_1)$, for each $n < \omega$ we can find a projection $h_n$ of $G$ onto $\mathbb{Z} y_n$, mapping $y_0, ..., y_{n-1}$ to zero and $y_n$ to $y_n$. So, for every $n_0 \leq n < \omega$, $h'_{(n, n_0)} := p^{\ell(n) - \ell(n_0)}h_n$ is an homorphism from $G$ onto $\mathbb{Z}(p^{\ell(n) - \ell(n_0)} y_{n})$, which has order $p^{\ell(n_0)}$.
 Moreover, fixed $n < \omega$, for every $a \in A$ there is $m_n(a) \in \{0, ..., p^{\ell(n_0)}- 1\}$ such that $h'_{(n, n_0)}(a) = m_n(a) p^{\ell(n) - \ell(n_0)} y_n$ (recall that $\mathbb{Z}(p^{\ell(n) - \ell(n_0)} y_{n})$ has order $p^{\ell(0)}$). Thus, by the pigeon-hole principle there are $n_1,  n_2 < \omega$ such that:
\begin{enumerate}[$(\cdot_1)$]
\item $n_0 \leq n_1 < n_2 \leq n_0 + (p^{\ell(n_0)})^{|A|}$;
\item if $a \in A$, then $m_{n_1}(a) = m_{n_2}(a)$.
\end{enumerate}
Now, let $h \in \mrm{End}(G)$ be defined as follows:
$$h(x) = h'_{(n_2, n_0)}(x) - f'_{(n_2, n_1)}(h'_{(n_1, n_0)}(x)),$$
where $f'_{(n_2, n_1)} (my_{n_1}) = m p^{\ell(n_2) -\ell(n_1)} y_{n_2}$, for $m \in \mathbb{Z}$. Then $h$ is as wanted in $(*_3)$.]

\smallskip
\noindent Now we can finish the proof. Let $(a_i : i < \omega)$ list the elements of $G$. We choose $(f_i, K_i, k_i, m_i, n_i)$ by induction on $i < \omega$ as follows:
	\begin{enumerate}[$(*_4)$]
	\item 
	\begin{enumerate}[(a)]
	\item For $i = 0$, let $f_0$ be as in $(*_2)$ and $K_0 = L_0 \oplus L_1$;
	\item for $i > 0$, $K_i = L_{n(i)}$, $n(i) \geq 2$ and $n_i$ is strictly increasing with $i$;
	\item $f_i \in \mrm{End}(G)$ has range in $K_i$ and:
	$$f_i(y_{n(i)}) = p^{k(i)} y_i, \text{ with } i \leq k(i) = \ell(n(i)) - m(i) < \ell(n(i)),$$
	so that $f_i(y_{n(i)})$ has order $p^{m(i)}$, where $m(i)$ is as in $(*_{3.1})$;
	\item $f_i$ maps $H_2$ into itself and $L_0 \oplus L_1$ to zero;
	\item $f_i$ maps $a_0, ..., a_i$ to $0$.
	\end{enumerate}
	\end{enumerate}
[Why can we carry the induction? For $i = 0$ use $(*_2)$, for $i = j+1$ use $(*)_3$.]
\newline Now, let $K = \bigoplus_{i < \omega} K_i$ define $f$ as follow: for $x \in G$, we let $f(x) = \sum \{f_i(x) : i < \omega\}$. This infinite sum is well-defined because by clause $(*_4)(e)$, $f_i(x) = 0$, for every large enough $i$. It is easy to see that $(f, K)$ are as wanted for clauses (2)(a)-(c) and (3), finally we show that clause (2)(d) holds, i.e., that $f$ satisfies the hypotheses of \ref{suff_cond}. First, \ref{suff_cond}(a) is obvious. Concerning \ref{suff_cond}(b), as $\mrm{ran}(f) \leq K$, clearly if $x \notin K$ we have that $f(x) \neq x$. On the other hand if $x \in K$ use the equation in $(*_4)(c)$, when $x \notin K_0$ and $(*_2)(d)$ when $x \in K_0$. Finally, concerning \ref{suff_cond}(c), let $x \in G$, we want to show that $z \neq x - f(x)$, where $z$ is as in $(*_2)(c)$. Recall that $G = H_2 \oplus L_1 \oplus L_0 = H_2 \oplus K_0 = K_0 \oplus H_2$, so $x = x_1 + x_2$, with $x_1 \in K_0$ and $x_2 \in H_2$. Thus, $f(x) = (x_1 - f(x_1)) + (x_2 - f(x_2))$. If $x_2 - f(x_2) \neq 0$, then clearly $z \neq x - f(x)$, as $z \in K_0$. On the other hand, if $x_2 - f(x_2) = 0$, then $f(x) = x_1 - f(x_1)$ and by $(*_2)(c)$ we are done. This concludes the proof, as (1) is clear and (3) is also satisfied as we can let $m(i) = i$ in $(*_4)$.
\end{proof}

	\begin{cclaim}\label{the_crucial_claim_post} In the context of Claim~\ref{the_crucial_claim}.
	\begin{enumerate}[(A)]
	\item We can omit ``reduced'' if we strengthen ``$\, \mrm{Tor}_p(G)$ is infinite" to ``$\, \mrm{Tor}_p(G)$ is of infinite rank and $\mrm{div}(G) \cap \mrm{Tor}_p(G)$ is of finite rank''.
	\item We can omit ``countable'' if $|G| < 2^{\aleph_0}$ and $\mrm{MA}$ holds or at least $\mathfrak{p} > |G|$.
	\item We can apply both (A) and (B) simultaneously.
	\end{enumerate}
\end{cclaim}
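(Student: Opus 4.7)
The plan is to treat the three parts separately: (A) by splitting off the divisible part of $G$, (B) by replacing the countable enumeration step in the proof of Claim~\ref{the_crucial_claim} with a $\sigma$-centered Martin's Axiom argument, and (C) by composing the two.

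For part (A), I would decompose $G = D \oplus R$ with $D = \mrm{div}(G)$ and $R$ reduced. The hypothesis that $\mrm{div}(G) \cap \mrm{Tor}_p(G) = \mrm{Tor}_p(D)$ has finite rank, combined with $\mrm{Tor}_p(G) = \mrm{Tor}_p(D) \oplus \mrm{Tor}_p(R)$ having infinite rank, forces $\mrm{Tor}_p(R)$ to be infinite. Apply Claim~\ref{the_crucial_claim} to $R$ to obtain $\bar K$, $K$, and $f_R \in \mrm{End}(R)$ satisfying conclusions (1)-(3). Extend $f_R$ to $f \in \mrm{End}(G)$ by setting $f|_D = 0$. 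The purity $K \leq_* G$ follows from $K \leq_* R \leq_* G$, the latter being immediate from $D \cap R = \{0\}$. For \ref{suff_cond}(b): if $f(x) = x$ with $x = d + r$, then $f_R(r) - r = d \in D \cap R = \{0\}$, so $d = 0$ and $f_R(r) = r$, contradicting the property of $f_R$; condition (c) is analogous using the witness $z \in K \subseteq R$.

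For part (B), I would carry out $(*_1)$--$(*_2)$ and the finitary pigeonhole $(*_3)$ of Claim~\ref{the_crucial_claim} verbatim (none of these steps use countability), producing $(L_n, H_{n+1}, y_n)_{n < \omega}$, projections $h_n : G \to L_n$, scalars $m_n(a) \in \mathbb{Z}/p^{\ell(n)}\mathbb{Z}$ with $h_n(a) = m_n(a) y_n$, and the initial piece $(f_0, K_0 = L_0 \oplus L_1, z = y_0)$. The only step of the countable proof that genuinely needs countability is $(*_4)$, which I would replace by $\sigma$-centered Martin's Axiom (equivalent to $\mathfrak{p} > |G|$ by Bell's theorem). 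Consider the poset $\mathbb{P}$ whose conditions are pairs $p = (\bar n_p, A_p)$, with $\bar n_p = (n_0^{(i)}, n_1^{(i)}, n_2^{(i)})_{i < i_p}$ a finite sequence of triples from $\omega$ forming a strictly increasing tuple overall and $A_p \in [G]^{<\omega}$, ordered by $p \leq q$ iff $\bar n_p$ extends $\bar n_q$, $A_p \supseteq A_q$, and for each $a \in A_q$ and each $i_q \leq i < i_p$ the congruence $m_{n_1^{(i)}}(a) \equiv m_{n_2^{(i)}}(a) \pmod{p^{\ell(n_0^{(i)})}}$ holds. Conditions sharing the same $\bar n_p$ are pairwise compatible by taking unions of $A$-parts, so $\mathbb{P}$ is $\sigma$-centered (there are only countably many possible stems). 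Density of $D_i = \{p : i_p > i\}$ follows directly from $(*_3)$: given $p$, the pigeonhole with $n_0 > \max \bar n_p$ and $A = A_p$ produces a new triple satisfying the congruence for every $a \in A_p$ (a trivial shift of the pigeonhole range ensures $n_0 < n_1$). Density of $E_a = \{p : a \in A_p\}$ is immediate. Since there are $|G| + \aleph_0 < \mathfrak{p}$ dense sets, a generic filter $\mathcal{G}$ meets them all and produces $\bar n = (n_0^{(i)}, n_1^{(i)}, n_2^{(i)})_{i < \omega}$ such that for each $a \in G$ the congruence holds for all but finitely many $i$. Defining $f_i$ from triple $i$ via the formula of $(*_3)$, setting $K_i = L_{n_2^{(i)}}$, and $f = \sum_i f_i$, the congruences force $f_i(a) = 0$ eventually in $i$, so $f \in \mrm{End}(G)$ is well-defined; the remaining verifications of (2)(a)-(c) and \ref{suff_cond}(a)-(c) repeat those of the countable case using $z = y_0$.

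Part (C) follows by composition: apply (A) to reduce to reduced $R$ with $|R| \leq |G| < \mathfrak{p}$ and $\mrm{Tor}_p(R)$ infinite, apply (B) to $R$, and extend the resulting $f_R$ to $G$ by zero on $D$. The main obstacle I anticipate is identifying the correct combinatorial device in (B): a naive $\mathfrak{p}$-pseudo-intersection argument built from an ultrafilter $\mathcal{U}$ does not obviously work, since one cannot simultaneously control the growth rate of $\ell(n_0^{(i)})$ against the approximation rate $|A \setminus B_{a,j}|$ of the pseudo-intersection uniformly over $a \in G$. The $\sigma$-centered MA formulation sidesteps this issue by letting the \emph{finitary} pigeonhole $(*_3)$ witness density of the $D_i$ one triple at a time, moving the uniformity burden from the pseudo-intersection onto the generic filter.
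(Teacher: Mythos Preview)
Your argument is correct in all three parts, and parts (A) and (C) match the paper's approach essentially verbatim (the paper is even terser for (A), simply citing that a direct summand of a non-co-Hopfian group makes the whole group non-co-Hopfian).

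For part (B), however, you take a genuinely different route than the paper, and your closing paragraph misjudges the situation. The paper does \emph{not} use $\sigma$-centered Martin's Axiom; it uses exactly the ``naive $\mathfrak{p}$-pseudo-intersection'' you dismiss. The trick is to take the pseudo-intersection over the correct index set: rather than subsets of $\omega$, the paper works with subsets of the countable set of \emph{triples}. For each finite $A \subseteq G$ and $n < \omega$ one sets
\[
X_{(A,n)} = \{(n_2,n_1,n_0) : n_2 > n_1 \geq n_0 \geq n \text{ and } (h'_{(n_2,n_0)} - f'_{(n_2,n_1)} h'_{(n_1,n_0)})(A) = \{0\}\}.
\]
Each $X_{(A,n)}$ is infinite by the finitary pigeonhole $(*_3)$, and the family is $\supseteq$-directed since $X_{(B,m)} \subseteq X_{(A,n)}$ whenever $A \subseteq B$ and $n \leq m$. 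There are $|G|$ such sets, so $\mathfrak{p} > |G|$ yields an infinite pseudo-intersection $X_*$ directly from the definition of $\mathfrak{p}$; one then picks triples from $X_*$ whose entries are strictly increasing. Because the triple already packages $n_0,n_1,n_2$ together, there is no tension between ``growth of $\ell(n_0)$'' and ``approximation rate'' to manage --- the issue you anticipated simply does not arise. Your $\sigma$-centered poset works (and Bell's theorem makes it equivalent in strength), but it is a detour: the paper's argument is a two-line application of the definition of $\mathfrak{p}$ once the family $X_{(A,n)}$ is written down.
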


	\begin{proof} Concerning $(A)$, let $G = G_1 \oplus G_2$, with $G_1$ divisible and $G_2$ reduced. As $\mrm{Tor}_p(G) = \mrm{Tor}_p(G_1) \oplus \mrm{Tor}_p(G_2)$ and $\mrm{Tor}_p(G_1)$ is of finite rank, necessarily $\mrm{Tor}_p(G_2)$ is of infinite rank, hence it is in particular infinite and so we can apply Claim~\ref{the_crucial_claim} to $G_2$ and thus conclude by \ref{c7}(\ref{c7(1)}) that $G$ is not co-Hopfian recalling that $G = G_1 \oplus G_2$. 

\smallskip
\noindent	
	Concerning clause (B), it suffices to run the proof of \ref{the_crucial_claim} up to $(*_4)$. First of all, recalling that $\mrm{MA} \wedge 2^{\aleph_0} \Rightarrow \mathfrak{p} > |G|$, we can assume that $\mathfrak{p} > |G|$. Now, let $(y_n : n < \omega)$, $(\ell(n) : n < \omega)$, $(h'_{(m, n)} : n < m < \omega)$ and $(f'_{(m, n)} : n < m < \omega)$ be as in $(*_1)$ and the proof of $(*_3)$ from the proof of \ref{the_crucial_claim}. Now, for every finite $A \subseteq G$ and $n < \omega$, let $X_{(A, n)}$ be the following set:
	$$\{(n_2, n_1, n_0) : n_2 > n_1 \geq n_0 \geq n \text{ and } (h'_{(n_2, n_0)} - f'_{(n_2, n_1)}h'_{(n_1, n_0)})(A) = \{0\}\}.$$
Now, we have:
\begin{enumerate}[$(+_1)$]
	\item
	\begin{enumerate}[(a)]
	\item for $(A, n)$ as above $X_{(A, n)}$ is infinite;
	[Why? By the proof of $(*_3)$ in \ref{the_crucial_claim}.]
	\item if $n \leq m < \omega$ and $A \subseteq B \subseteq_\omega G$, then $X_{(B, m)} \subseteq X_{(A, n)}$.
	\end{enumerate}
\end{enumerate}
As the set $\{(A, n) : A \subseteq_\omega G, n < \omega\}$ has cardinality $|G|$ and by $\mathfrak{p} > |G|$ , recalling that $|G|$ is finite, by the definition of $\mathfrak{p}$ and $(+_1)$ we have:
\begin{enumerate}[$(+_2)$]
	\item there is an infinite $X_* \subseteq \{(n_2, n_1, n_0) : n_0 \leq n_1 < n_2 < \omega\}$ such that for every $(A, n)$ as above we have $X_* \subseteq X_{(A, n)}$ modulo finitely many elements.
\end{enumerate}
Now, by induction on $i < \omega$, choose $(n_{(i, 2)}, n_{(i, 1)}, n_{(i, 1)}) \in X_*$ such that $j < i$ implies $n_{(j, 2)} < n_{(i, 0)}$. Finally, let $f_0 \in \mrm{End}(G)$ be as in $(*_4)(a)$ of the proof of Claim~\ref{the_crucial_claim} and, for $0 < i < \omega$, let $f_i \in \mrm{End}(G)$ be as follows:
$$h'_{(n_{(i, 2)}, n_{(i, 0)})} - f'_{(n_{(i, 2)}, n_{(i, 1)})}h'_{(n_{(i, 1)}, n_{(i, 0)})}$$
Now, let $K = \bigoplus_{i < \omega} K_i$, i.e., as in the proof of \ref{the_crucial_claim}, and let $f$ be such that for $x \in G$ we have $f(x) = \sum \{f_i(x) : i < \omega\}$. Notice that $f$ is well-defined (and so clearly $f \in \mrm{End(G)})$), as for every $x \in G$ we have that $\{i < \omega : f_i(x) \neq 0 \}$ is finite, given that $X_* \subseteq X_{(\{a\}, 0)}$ modulo finite, by construction. It is now easy to see that $(f, K)$ are as wanted, arguing as in the proof of this is as in the proof \ref{the_crucial_claim}.
This concludes the proof of (B), finally clause (C) is by combining the proofs of (A) and (B).
\end{proof}

	We are now in the position to prove our first main theorem.
	
	\begin{ntheorem1} Suppose that $G \in \mrm{AB}$ is reduced and $\aleph_0 \leq |G| < 2^{\aleph_0}$. If $\mathfrak{p} > |G|$ and there is a prime $p$ such that $\mrm{Tor}_p(G)$ is infinite, then $G$ is not co-Hopfian. In particular there are no infinite reduced co-Hopfian $p$-groups $G$ of size $\aleph_0 \leq |G| < \mathfrak{p}$.
\end{ntheorem1}
	
	\begin{proof} Immediate by Claims~ \ref{the_crucial_claim} and \ref{the_crucial_claim_post}.
\end{proof}

	\begin{remark}\label{right_cardinal_invariant} In the context of Claim~\ref{the_crucial_claim_post}(B) we ask ourselves: is $\mathfrak{p}$ the right cardinal invariant? The answer is: yes, but not quite. On this see \cite{F2057}.
\end{remark}

	The following claim is essentially known, in particular (1), see e.g. \cite[pg. 213]{b&p} on this, but we mention it as it follows from the proofs of the claims above.

\begin{cclaim} Let $G \in \mrm{AB}$ be reduced.
	\begin{enumerate}[(1)]
	\item If $\mrm{Tor}_p(G)$ is of cardinality $> 2^{\aleph_0}$, then $G$ is not co-Hopfian;
	\item If $|\mrm{Tor}_p(G)| \geq \lambda$, $\mrm{cof}(\lambda) = \aleph_0$ and $\alpha < \lambda \Rightarrow |\alpha|^{\aleph_0} < \lambda$, then $|\mrm{End}(G)| \geq 2^{\lambda}$.
\end{enumerate}
\end{cclaim}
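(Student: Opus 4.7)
The plan is, in both (1) and (2), to exhibit a bounded pure subgroup of $G$ inside $T := \mrm{Tor}_p(G)$ that is large enough, and then exploit that it is a direct summand of $G$ by Fact~\ref{kapla1}. Note that $T$ is reduced (any divisible subgroup of $T$ would be divisible in the reduced group $G$), that $T \leq_* G$ by Fact~\ref{fact6A}(2), and that $T$ contains a copy of $\mathbb{Z}(p)$.

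I would split on whether $T$ is bounded. If bounded, Fact~\ref{bounded_exponent} gives $T = \bigoplus_{i=1}^{m} (\mathbb{Z}(p^{n_i}))^{(\kappa_i)}$, which plays the role of the basic subgroup $B$ below. If unbounded, a basic subgroup $B = \bigoplus_n (\mathbb{Z}(p^{m_n}))^{(\kappa_n)}$ of $T$ exists (standard for reduced $p$-groups), and the embedding of $T$ into the $p$-adic completion of $B$ yields the estimate $|T| \leq |B|^{\aleph_0}$. In either case each summand $B_n := (\mathbb{Z}(p^{m_n}))^{(\kappa_n)}$ is pure in $G$ by transitivity of purity (pure in $B$ as a direct summand, pure in $T$ by definition of basic subgroup, pure in $G$ by Fact~\ref{fact6A}(2)); being bounded, $B_n$ is a direct summand of $G$ by Fact~\ref{kapla1}.

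For (1), $|T| > 2^{\aleph_0}$ forces $|B| > 2^{\aleph_0}$ (since $(2^{\aleph_0})^{\aleph_0} = 2^{\aleph_0}$), so $\sum_n \kappa_n > 2^{\aleph_0}$ forces some $\kappa_n > 2^{\aleph_0}$, in particular infinite. Then $B_n$ admits the shift $e_\alpha \mapsto e_{\alpha+1}$ on a basis indexed by $\kappa_n$ as an injective but non-surjective endomorphism, so $B_n$ is not co-Hopfian, and Claim~\ref{c7}(1) gives that $G = B_n \oplus H$ is not co-Hopfian.

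For (2), the hypothesis $\alpha < \lambda \Rightarrow |\alpha|^{\aleph_0} < \lambda$ combined with $|T| \leq |B|^{\aleph_0}$ (trivial in the bounded case) forces $|B| \geq \lambda$. Rather than hunting for a single $\kappa_n \geq \lambda$, which need not exist when $\mrm{cof}(\lambda) = \aleph_0$ and the $\kappa_n$ only accumulate to $\lambda$ from below, I would count via dimensions: purity of $B$ in $T$ together with divisibility of $T/B$ gives $T/pT \cong B/pB$, of $\mathbb{F}_p$-dimension $\sum_n \kappa_n = |B| \geq \lambda$, so $|\mrm{Hom}(T,\mathbb{Z}(p))| \geq p^\lambda = 2^\lambda$. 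Each such $\varphi$ extends to $\tilde\varphi \in \mrm{Hom}(G,\mathbb{Z}(p))$ by Fact~\ref{extending_gtoh} (using $T \leq_* G$ and pure-injectivity of bounded groups), and composing with a fixed inclusion $\iota : \mathbb{Z}(p) \hookrightarrow G$ injects $\mrm{Hom}(T,\mathbb{Z}(p))$ into $\mrm{End}(G)$ (since $\iota$ is mono and $\tilde\varphi$ restricts to $\varphi$ on $T$), yielding $|\mrm{End}(G)| \geq 2^\lambda$. The one step where I expect some care is needed is the cardinal bound $|B| \geq \lambda$, where the specific form of the hypothesis on $\lambda$ is essential; everything else is a direct assembly of the facts collected in Section~2.
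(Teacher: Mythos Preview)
Your proof is correct. The paper does not spell out a proof of this claim; it only remarks that (1) is essentially in Beaumont--Pierce and that both parts follow from the machinery developed in Claims~\ref{the_crucial_claim} and~\ref{the_crucial_claim_post} (the explicit construction of endomorphisms $f = \sum_i f_i$ into a chain of cyclic summands $K_i$). Your route is genuinely different: rather than recycling that construction, you invoke basic subgroups of the reduced $p$-group $T = \mrm{Tor}_p(G)$ to locate a large homocyclic pure bounded summand of $G$, then argue directly. For (1) the shift on an infinite-rank homocyclic summand is a cleaner witness to non-co-Hopficity than the $f$ built in~\ref{the_crucial_claim}; for (2) the count via $\dim_{\mathbb{F}_p}(T/pT) = \dim_{\mathbb{F}_p}(B/pB) \geq \lambda$ together with Fact~\ref{extending_gtoh} is a one-line alternative to varying the parameters in the paper's inductive construction. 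A pleasant side effect is that your argument for (2) never uses the hypothesis $\mrm{cof}(\lambda) = \aleph_0$: once $|T| \leq |B|^{\aleph_0}$ and $\alpha < \lambda \Rightarrow |\alpha|^{\aleph_0} < \lambda$ force $|B| \geq \lambda$, the rest is pure counting. The paper's intended approach, by contrast, would presumably use the cofinality-$\aleph_0$ assumption to organize the construction along an $\omega$-chain as in~\ref{the_crucial_claim}, so you have in fact obtained a slightly stronger statement. The one ingredient you import from outside Section~2 is the existence of basic subgroups and the bound $|T| \leq |B|^{\aleph_0}$; both are standard (Fuchs, Kaplansky), so this is entirely acceptable.
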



%


	This claim will be relevant in what follows and it is of independent interest.

	\begin{cclaim}\label{quotient_withT_divisible} Let $G \in \mathrm{AB}$ and $p \in \mathbb{P}$. If $\mrm{Tor}_p(G)$ is bounded and $G/\mrm{Tor}_p(G)$ is not $p$-divisible, then $G$ is not co-Hopfian.
\end{cclaim}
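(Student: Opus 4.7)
Writing $T := \mrm{Tor}_p(G)$, the plan is to decompose $G$ as $G = T \oplus H$ and then exhibit an explicit injective non-surjective endomorphism. The decomposition is available because $T$ is pure in $G$ by Fact~\ref{fact6A}(2), and pure bounded subgroups are direct summands by Fact~\ref{kapla1}; so we may fix $H \leq G$ with $G = T \oplus H$. Via the canonical isomorphism $G/T \cong H$, the hypothesis that $G/T$ is not $p$-divisible supplies some $h_0 \in H$ with $h_0 \notin pH$. Moreover $\mrm{Tor}_p(H) \subseteq H \cap T = \{0\}$, since every $p$-torsion element of $G$ lies in $T$ by definition; so $H$ itself has no nontrivial $p$-torsion.

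Next, define $f \in \mrm{End}(G)$ to be the identity on $T$ and multiplication by $p$ on $H$, i.e., $f(t + h) := t + ph$ for $t \in T$ and $h \in H$. For injectivity: if $t + ph = 0$ with $t \in T$, $h \in H$, then comparing the $T$- and $H$-coordinates of the sum forces $t = 0$ and $ph = 0$, and the latter gives $h = 0$ because $\mrm{Tor}_p(H) = \{0\}$. For non-surjectivity: $\mrm{ran}(f) = T \oplus pH$, and if we had $h_0 \in T \oplus pH$, writing $h_0 = t + ph$ with $t \in T$, $h \in H$ would give $h_0 - ph \in T \cap H = \{0\}$, hence $h_0 = ph \in pH$, contradicting the choice of $h_0$. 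Thus $f$ is an injective endomorphism of $G$ that is not onto, so $G$ is not co-Hopfian.

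I do not anticipate a real obstacle here: boundedness of $T$ is precisely the ingredient that lets one split it off via Fact~\ref{kapla1}, and once this splitting is in place, the failure of $p$-divisibility in the complement immediately supplies an element outside the image of multiplication by $p$ on $H$.
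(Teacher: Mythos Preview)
Your proof is correct and follows essentially the same approach as the paper: split off $T=\mrm{Tor}_p(G)$ as a direct summand via Facts~\ref{fact6A}(2) and~\ref{kapla1}, then use multiplication by $p$ on the complement $H$ (which has trivial $p$-torsion) to witness non-co-Hopfianity. The only cosmetic difference is that the paper works on $H$ alone and then invokes Claim~\ref{c7}(\ref{c7(1)}) to pass from $H$ to $G=T\oplus H$, whereas you build the endomorphism $t+h\mapsto t+ph$ of $G$ directly; these are the same argument.
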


	\begin{proof} Let $K = \mrm{Tor}_p(G)$, then, recalling that by assumption $K$ is bounded, by Fact~\ref{kapla1}, $K$ is a direct summand of $G$, say $G = H \oplus K$. Now, $H \in \mrm{AB}$ and $\mrm{Tor}_p(H) = \{0\}$, hence $x \mapsto px$ is a $1$-to-$1$ endomorphism of $H$ which is not onto (as otherwise $G/\mrm{Tor}_p(G)$ would be $p$-divisible). By Claim~\ref{c7}(1) we are done. 
\end{proof}

\section{Non-existence of co-Hopfian abelian groups}

	As mentioned in the introduction, in this section we aim at proving Theorem~\ref{main_th2}, to this extent we prove two theorems: \ref{no_cohopfian_1} and \ref{no_cohopfian_2}, from which Theorem~\ref{main_th1} follows. Theorem~\ref{no_cohopfian_1} has stronger assumptions and a simpler proof, while Theorem~\ref{no_cohopfian_2} has weaker assumptions but a more complicated proof, but it is needed for Theorem~\ref{main_th2}.


	\begin{theorem}\label{no_cohopfian_1} Suppose that $\lambda = \sum_{n < \omega} \lambda_n > 2^{\aleph_0}$, and, for every $n < \omega$, $\lambda_n = \lambda^{\aleph_0}_n < \lambda_{n+1}$. If $G \in \mrm{AB}_\lambda$, then $G$ is not co-Hopfian.
\end{theorem}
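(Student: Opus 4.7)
The plan is to argue by contradiction: assume $G$ is co-Hopfian and produce an injective $f\in\mrm{End}(G)$ whose image is a proper subgroup.

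First I would reduce. By Claim~\ref{c7}(\ref{c7(1)}), we may take $G$ reduced: writing $G=D\oplus R$ with $D$ divisible and $R$ reduced, if $|D|=\lambda$ then $D$ itself has a non-co-Hopfian summand of size $\lambda$ (either a copy of $\mathbb{Q}^{(\lambda)}$, which fails co-Hopfianity by Claim~\ref{c7}(2), or a divisible $p$-torsion part of cardinality $>2^{\aleph_0}$), so we may reduce to the case $|R|=\lambda$ and replace $G$ by $R$. The Beaumont--Pierce bound cited in the introduction then gives $|\mrm{Tor}(G)|\leq 2^{\aleph_0}<\lambda_0$. Applying Claim~\ref{quotient_withT_divisible} prime by prime, we may further assume that for every prime $p$, either $\mrm{Tor}_p(G)$ is unbounded or $G/\mrm{Tor}_p(G)$ is $p$-divisible.

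Next, exploiting $\lambda_n^{\aleph_0}=\lambda_n$, I would take a continuous $\in$-chain $M_0\in M_1\in\cdots$ of elementary submodels of $(H(\chi),\in)$ for sufficiently large regular $\chi$, with $G,\mrm{Tor}(G)\in M_0$, $|M_n|=\lambda_n$, and ${}^{\omega}M_n\subseteq M_n$ (such $M_n$ exist precisely because $\lambda_n^{\aleph_0}=\lambda_n$). Setting $G_n:=G\cap M_n$ yields a continuous filtration $G=\bigcup_n G_n$ by pure subgroups of strictly increasing sizes $\lambda_n$, each containing $\mrm{Tor}(G)$ and closed under countable sequences of their own elements, so that partial endomorphisms, purity certificates and divisibility witnesses pertaining to $G_n$ all live inside $G_n$.

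Finally, fixing $x_0\in G_0\setminus\{0\}$, I would build $f$ by induction on $n$, setting $f_n:=f\!\restriction\!G_n\colon G_n\to G_{n+1}$ injective, extending $f_{n-1}$, with $x_0\notin f_n(G_n)$. The strict gap $\lambda_n<\lambda_{n+1}$ provides room to embed $G_n$ into $G_{n+1}$ non-surjectively while avoiding $x_0$, and the amalgamation step at stage $n$ (extending the previous injection together with the inclusion $G_{n-1}\hookrightarrow G_n$ to an injection $G_n\to G_{n+1}$) is solvable inside $G_{n+1}$ by elementarity of $M_{n+1}$, which sees all the countable data involved. Taking $f=\bigcup_n f_n$ then gives an injective endomorphism of $G$ with $x_0\notin\mrm{ran}(f)$, contradicting co-Hopfianity. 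The main obstacle is precisely this amalgamation: showing that the required extension exists inside $G_{n+1}$, not merely somewhere in $G$. This is where the structural reductions of the first paragraph (negligible torsion, controlled $p$-divisibility) and the $\omega$-closure of the filtration must work together, with the hypothesis $\lambda>2^{\aleph_0}$ entering precisely to make Beaumont--Pierce render the torsion part negligibly small.
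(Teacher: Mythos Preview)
Your reductions in the first paragraph are close to the paper's (and correct in spirit), and your filtration $(G_n)$ with $G_n\preccurlyeq_{\mathfrak{L}_{\aleph_1,\aleph_1}}G$, $\mrm{Tor}(G)\leq G_0$, $|G_n|=\lambda_n$ is exactly what the paper sets up. The divergence, and the genuine gap, is in your third paragraph.

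\textbf{The amalgamation step does not go through.} You need, at every stage, to extend an injection $f_{n-1}\colon G_{n-1}\to G_n$ to an injection $f_n\colon G_n\to G_{n+1}$. Abelian groups do not enjoy this extension property: given $H\leq_* G$ and an injection $H\hookrightarrow G'$, there is in general no injection $G\hookrightarrow G'$ extending it, even when $|G'|>|G|$. Elementarity of $M_{n+1}$ cannot manufacture such an extension; it can only reflect one that already exists in $G$, and you have no argument that it does. (Already the base case is unclear: why should there be \emph{any} injection $G_0\to G_1$ missing a prescribed $x_0$, other than via some concrete construction?) The phrase ``the strict gap $\lambda_n<\lambda_{n+1}$ provides room to embed'' is a cardinality intuition that has no content for abelian groups.

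\textbf{What the paper actually does.} The paper never tries to build an abstract embedding. Instead it uses Observation~\ref{suff_cond}: it suffices to produce $f\in\mrm{End}(G)$ with $f(x)\neq x$ for $x\neq 0$ and some $z\notin\{x-f(x):x\in G\}$; then $\mrm{id}_G-f$ witnesses non-co-Hopfianity. The endomorphism $f$ is built with \emph{range contained in a countable torsion subgroup} $K\leq G_0$. Concretely: for each $n$ one picks $a_n\in G_{n+1}\setminus G_n$, lets $H_n$ be the pure closure of $G_n+\mathbb{Z}a_n$, and (using $\omega$-closure of $G_n$, this is where $\lambda_n^{\aleph_0}=\lambda_n$ is used) shows $H_n=G_n\oplus L_n$ with $L_n$ torsion-free of rank $1$. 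Since $G$ is reduced, $L_n$ is not $p_n$-divisible for some prime $p_n$; the reductions force $p_n\in\mathbb{A}=\{p:\mrm{Tor}_p(G)\text{ infinite unbounded}\}$, so inside $\mrm{Tor}_{p_n}(G)\leq G_0$ there sits a finite cyclic $p_n$-group $K_n$. One then builds $f_n\in\mrm{Hom}(G,K_n)$ which is zero on $G_n$ (for $n>0$) and nonzero on $L_n$, using Fact~\ref{extending_gtoh} to extend from $H_n$ to $G$. The sum $f=\sum_n f_n$ is well-defined because each $x\in G$ lies in some $G_m$, killing all but finitely many terms. Verifying the conditions of Observation~\ref{suff_cond} is then routine. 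The point is that the ``extension problem'' the paper solves is extending homomorphisms into a \emph{bounded} torsion target, which is automatic by pure-injectivity, not extending monomorphisms into $G$, which is what you attempt and which fails.
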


	\begin{proof} The proof splits into cases.
\newline \underline{Case 1}. $\mrm{Tor}_p(G) = \{ 0 \}$ and $pG \neq G$. 
\newline In this case $x \mapsto px$ is a $1$-to-$1$ endomorphism of $G$ which is not onto.
\newline \underline{Case 2}. $|\mrm{Tor}(G)| > 2^{\aleph_0}$.
\newline In this case $G$ is not co-Hopfian, see e.g. \cite{b&p}.
\newline \underline{Case 3}. $G$ has an infinite rank divisible subgroup which is torsion-free or a $p$-group.
\newline This case is easy.
\newline \underline{Case 4}. For some $p \in \mathbb{P}$, $\mrm{Tor}_p(G)$ is finite and $G/\mrm{Tor}_p(G)$ is not $p$-divisible.
\newline Also in this case $G$ is not co-Hopfian, cf. Claim~\ref{quotient_withT_divisible}. 
\newline \underline{Case 5}. For some $p \in \mathbb{P}$, $\mrm{Tor}_p(G)$ is infinite and bounded.
\newline Also in this case $G$ is not co-Hopfian, cf. Claim~\ref{the_crucial_claim}. 

\smallskip
\noindent Hence, recalling \ref{c7}, w.l.o.g. for the rest of the proof we can assume:
\begin{enumerate}[$(+)$]
\item $G$ is reduced and $G$ does not fall under Cases 1, 2, 3, 4, 5.
\end{enumerate}
So we have:
\begin{enumerate}[$(*_0)$]
	\item For each $p \in \mathbb{P}$ we have (a) or (b), where:
	\begin{enumerate}[$(a)_p$]
	\item $\mrm{Tor}_p(G)$ is infinite of cardinality $\leq 2^{\aleph_0}$;
	\item $\mrm{Tor}_p(G)$ is finite and $G/\mrm{Tor}_p(G)$ is $p$-divisible.
	\end{enumerate}
\end{enumerate}
\begin{enumerate}[$(*_1)$]
	\item 
	\begin{enumerate}[(a)]
	\item Let $\mathbb{A} = \{p \in \mathbb{P}: \mrm{Tor}_p(G) \text{ is infinite}\}$;
	\item For every $p \in \mathbb{A}$ there is $K_p = \bigoplus \{K_{(p, n)} : n < \omega \} \leq_* \mrm{Tor}_p(G)$ such that for every $n < \omega$, $K_{(p, n)} \cong \mathbb{Z}_{p^{k(p, n)}} z_{(p, n)}$, with $1 \leq k(p, n) < \omega$ and $k(p, n)$ increasing with $n$, as $p \in \mathbb{A}$ and not Case~5 $\Rightarrow \mrm{Tor}_p(G)$ is not bounded.
	\end{enumerate}
	[Why we can get $K_p = \bigoplus \{K_{(p, n)} : n < \omega \} \leq_* \mrm{Tor}_p(G)$? By Fact~\ref{fact6A}(2).]
\end{enumerate}
\noindent In $(*_7)$ below we will prove that $\mathbb{A} \neq \emptyset$. Now we move to:
\begin{enumerate}[$(*_2)$]
	\item Choose $(G_n : n < \omega)$ such that:
	\begin{enumerate}[(a)]
	\item $\bigcup_{n < \omega} G_n = G$;
	\item for every $n < \omega$, $G_n \leq G_{n+1} \leq G$ and $|G_n| \leq \lambda_n$;
	\item $G_n \preccurlyeq_{\mathfrak{L}_{\aleph_1, \aleph_1}} G$;
	\item for every $n < \omega$, if $(a_\ell : \ell < \omega) \in G_n^{\omega}$, $(x_\ell : \ell < \omega) \in G^\omega$, $(k_\ell : \ell < \omega) \in \mathbb{Z}$ and, for every $\ell < \omega$, $x_\ell = k_\ell x_{\ell+1} + a_\ell$, then for some $(y_\ell : \ell < \omega) \in G^\omega_n$ we have that, for every $\ell < \omega$, $y_\ell = k_\ell y_{\ell+1} + a_\ell$;
	\item $G_n \leq_* G$;
	\item $\mrm{Tor}(G) \leq G_0$;
	\end{enumerate}
	\end{enumerate}
[Why $(*_2)$ holds? We can fulfill (a)-(b) because we assume that $\lambda = \sum_{n < \omega} \lambda_n$, and we can fulfill (c) because we assume $\lambda_n = \lambda^{\aleph_0}_n$ (see e.g. \cite[Corollary~3.1.2]{dickman}). Items (d) and (e) follow from (c). Finally, we can fulfill (f) easily recalling that $|\mrm{Tor}_p(G)| \leq 2^{\aleph_0}$ and that by assumption $\lambda_0 = \lambda_0^{\aleph_0}$, which implies that $\lambda_0 \geq 2^{\aleph_0}$.]                   
\begin{enumerate}[$(*_3)$]
	\item Choose $(H_n : n < \omega)$ such that:
	\begin{enumerate}[(a)]
	\item $G_n \leq H_n \leq G_{n+1}$;
	\item $H_n$ is a pure subgroup of $G$;
	\item $H_n/G_n$ is torsion-free of rank $1$.
	\end{enumerate}
\end{enumerate}
[Why possible? Let $a_n \in G_{n+1} \setminus G_n$ and let $H_n$ be the pure closure of $G_n + \mathbb{Z}a_n$, then recalling $(*_2)(a)$ and $(*_2)(f)$ we are done.]
\newline From here till $(*_8)$ excluded, fix $n < \omega$.
\begin{enumerate}[$(*_4)$]
	\item Let $h_n \in \mrm{Hom}(H_n, \mathbb{Q})$ be such that $h_n \neq 0$ and $\mrm{ker}(h_n) = G_n$.
\newline	[Why possible? By $(*_3)$.]
\end{enumerate}
\begin{enumerate}[$(*_5)$]
	\item There is an homomorphism $g_n: \mrm{ran}(h_n) \rightarrow H_n$ be such that $h_n \circ g_n = id_{\mrm{ran}(h_n)}$.
\end{enumerate}
We prove $(*_5)$. Let $q_{(n, \ell)} \in \mrm{ran}(h_n)$ be such that:
\begin{enumerate}[$(\cdot_1)$]
	\item $\mathbb{Z}q_{(n, \ell)} \subseteq \mathbb{Z} q_{(n, \ell+1)} \subseteq \mrm{ran}(h_n)$;
	\item $\bigcup_{\ell < \omega} \mathbb{Z} q_{(n, \ell)} = \mrm{ran}(h_n)$.
	\end{enumerate}
Let $q_{(n, \ell)} = k_{(n, \ell)} q_{(n, \ell+1)}$, with $1 \leq k_{(n, \ell)} < \omega$. Let $x_{(n, \ell)}$ be such that $h_n(x_{(n, \ell)}) = q_{(n, \ell)}$. Thus, for each $\ell < \omega$ we have:
	\begin{enumerate}[$(*_{5.1})$]
	\item $h_n(k_{(n, \ell)} x_{(n, \ell+1)} - x_{(n, \ell)}) = k_{(n, \ell)} q_{(n, \ell+1)} - q_{(n, \ell)} = 0$,
	\end{enumerate}
which means that $a_{(n, \ell)} := k_{(n, \ell)} x_{(n, \ell+1)} - x_{(n, \ell)} \in G_n$. By $(*_2)(c)$, there are $y_{(n, \ell)} \in G_n$ such that for $\ell < \omega$ we have $a_{(n, \ell)} = k_{(n, \ell)} y_{(n, \ell+1)} - y_{(n, \ell)}$. Now, define $g_n : \mrm{ran}(h_n) \rightarrow H_n$ as follows, for $\ell < \omega$ and $m \in \mathbb{Z}$, we let:
$$g_n(m q_{(n, \ell)}) = m(x_{(n, \ell)} - y_{(n, \ell)}),$$
clearly $g_n$ is well-defined and it is $1$-to-$1$ homomorphism from $\mrm{ran}(h_n)$ into $H_n$.
\begin{enumerate}[$(*_6)$]
	\item
	\begin{enumerate}[(a)]
	\item $H_n = G_n \oplus L_n$, where $L_n = \mrm{ran}(g_n)$;
	\item $L_n$ is torsion-free of rank $1$.
	\end{enumerate}
\end{enumerate}
[Why? As $g_n$ is $1$-to-$1$ and $G_n \cap L_n = \{0\}$.]
\begin{enumerate}[$(*_7)$]
	\item
	\begin{enumerate}[(a)]
	\item $L_n$ is not divisible;
	\item there is a prime $p_n$ such that $p_n L_n \neq L_n$;
	\item we can choose $y_n \in L_n$ not divisible by $p_n$ (in $L_n$ and even in $G$);
	\item $p_n \in \mathbb{A}$.
	\end{enumerate}
\end{enumerate}
[Why? Item (a) is because of (+)  (see the beginning of the proof), which implies in particular that $G$ is not reduced, recalling that $\{ 0 \} \neq L_n \leq G$. Item (b) is by (a). Item (c) is because by (b) we can choose $y_n \in L_n$ as required (as $L_n \leq_* H_n \leq_* G$, by $(*_6)(a)$ and $(*_3)(b)$, respectively). Lastly, (d) is because by (b) we have  $G/\mrm{Tor}_{p_n}(G)$ is not $p_n$-divisible, recalling the definition of $\mathbb{A}$. This proves $(*_7)$.]
	\begin{enumerate}[$(*_8)$]
	\item For $n < \omega$, recalling $(*_1)$, let:
	\begin{enumerate}[(a)]
	\item if $n = 0$, then $K_n = K_{(p_n, 0)} \oplus K_{(p_n, 1)}$;
	\item if $n > 0$, then $K_{n} = K_{(p_n, n+1)}$;
	\item $K = \bigoplus \{ K_n : n < \omega \}$.
	\end{enumerate}
	\end{enumerate}
	\begin{enumerate}[$(*_{8.1})$]
	\item $K_n \leq_* \mrm{Tor}_{p_n}(G) \leq_* G$.
	\end{enumerate}
	[Why? The first is by $(*_1)(b)$ and the second is by Fact~\ref{fact6A}.]
	\begin{enumerate}[$(*_9)$]
		\item Let $n < \omega$, then:
		\begin{enumerate}
		\item 
		\begin{enumerate}[(i)]
		\item for $n = 0$, we let:
		\begin{enumerate}[$(\cdot_1)$]
		\item $h^0_0 \in \mrm{End}(K_0)$ be such that:
		$$h^0_0(z_{(p_0, 0)}) = z_{(p_0, 0)} + p^{k(p_0, 1) - k(p_0, 0)} z_{(p_0, 1)}$$
		$$h^0_0(z_{(p_0, 1)}) = z_{(p_0, 1)};$$
		\item $z = z_{(p_0, 0)}$, so $x \in K_0 \Rightarrow x - h^0_0(x) \neq z$, as in the proof of~\ref{the_crucial_claim};
		\item let $f^0_0 \in \mrm{Hom}(G_0, K_0)$ extend $h^0_0$;
		\end{enumerate}
		\item for $n > 0$, we let $f^0_n \in \mrm{Hom}(G_n)$ be zero;
		\end{enumerate}
		\item there is onto $f^1_n \in \mrm{Hom}(L_n, K_n)$ mapping $y_n$ to $z_{(p_n, n)}$;
		\item there is $f^2_n \in \mrm{Hom}(H_n, K_n)$ extending $f^1_n$ and $f^0_n$;
		\item there is $f^3_n = f_n \in \mrm{Hom}(G, K_n)$ extending $f^2_n$.
		\end{enumerate}
	\end{enumerate}
[Why? (a)$(\cdot_1)$-$(\cdot_2)$ is clear and (a)$(\cdot_3)$ is by \ref{extending_gtoh} recalling $(*_{8.1})$. Concerning (b), for every $\ell < \omega$, there are $y_{(n, \ell)} \in L$ such that $(\mathbb{Z}y_{(n, \ell)} : \ell < \omega)$ is increasing with union $L$ and $y_{(n, 0)} = y_n$, so let $y_{(n, \ell)} = m_{(n, \ell)} y_{(n, \ell+1)}$ with $1 \leq m_{(n, \ell)} < \omega$ and $(m_{(n, \ell)}, p_n) = 1$, by the choice of $y_n$. Now, let $b_{(n, \ell)} \in K_n$ such that:
$$b_{(n, 0)} = z_{(p_n, n)} \text{ and }\ell = k+1 \Rightarrow b_{(n, k)} = m_{(n, k)} b_{(n, \ell)},$$
where for $\ell = k+1$ we use that $K_n$ is divisible by $m_{(n, \ell)}$ as $(m_{(n, \ell)}, p_n) = 1$. This proves (b).
Clause (c) is because $H_n = G_n \oplus L_n$. Finally, clause (d) is by Fact~\ref{extending_gtoh}.]
Now we can continue as in the proof of Claim~\ref{the_crucial_claim}, specifically, we define $f$ as follows:
$$f(x) = \sum \{f_n(x) : n < \omega \}.$$
This infinite sum is well-defined because by $(*_9)(c)$, $n >0$, $x \in G_n$ implies $f_n(x) = f^2(x) = 0$ and by $(*_2)$, $x \in G$ implies for almost all $n < \omega$, $x \in G_n$. Now we claim that $f$ as in as \ref{suff_cond}. Preliminarily, notice that for every $n < \omega$ we have that:
	\begin{enumerate}[$(*_{10})$]
	\item $f$ maps $G$ into $K$ and $K \leq G_0$.
	\end{enumerate}
Now, returning to showing that $f$ as in as \ref{suff_cond}, Item \ref{suff_cond}(a) is obvious, concerning \ref{suff_cond}(b), as $\mrm{ran}(f) \leq K$, clearly if $x \notin K$, we have that $f(x) \neq x$. On the other hand, if $x \in K \setminus K_0$, then, recalling $f \restriction \bigoplus_{n > 0} K_n$ is zero, as $f(x) \in K_0$, $f(x) \neq x$. Finally if $x \in K_0$, then we use the choice in $(*_9)$.
Finally, concerning \ref{suff_cond}(c), set $z = y_0 + z_{(p_0, 0)}$, we want to show that for every $x \in G$, $z \neq x - f(x)$. We distinguish cases:
\newline \underline{Case~1}. $x \in K_0$.
\newline In this case we use the choice of $h^0_0$ from $(*_9)$.
\newline \underline{Case~2}. $x \in K \setminus K_0$.
\newline In this case $f(x) \in K_0 \leq K$ so $x - f(x) \in K \setminus K_0$ but $z \in K_0$, so $x - f(x) \neq z$.
\newline \underline{Case~3}. $x \in G \setminus K$.
\newline By $(*_{10})$, $f(x) \in K$, hence $x - f(x) \in G \setminus K$, but $z \in K$, so $x - f(x) \neq z$.
\end{proof}

	To follow there is a strengthening or \ref{no_cohopfian_1} with a more complicated proof.

	\begin{theorem}\label{no_cohopfian_2} Let $\lambda^{\aleph_0} > \lambda > 2^{\aleph_0}$, then:
	\begin{enumerate}[(1)]
	\item no $G \in \mrm{AB}_\lambda$ is co-Hopfian;
	\item if $G \in \mrm{AB}_\lambda$ is reduced, $|G/\mrm{Tor}(G)|^{\aleph_0} = \lambda^{\aleph_0}$ and there is $\mathbb{A} \subseteq \mathbb{P}$ such that:
	\begin{enumerate}[(a)]
	\item if $p \in \mathbb{A}$, then $\exists \, K_p = \bigoplus_{n < \omega} K_{(p, n)} \leq_* G$, $K_{(p, n)} \neq \{0\}$ a finite $p$-group;
	\item if $p \in \mathbb{P} \setminus \mathbb{A}$, then $G/\mrm{Tor}_p(G)$ is $p$-divisible;
	\end{enumerate}
\end{enumerate}		
	then we have that $\lambda^{\aleph_0} \leq |\{ h  \in \mrm{End}(G, K\}|$, where $K = \bigoplus_{p \in \mathbb{A}} K_p$.
\end{theorem}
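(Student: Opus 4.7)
The plan is to derive~(1) from~(2) by a case analysis parallel to Theorem~\ref{no_cohopfian_1}, and prove~(2) by refining that construction to produce $\lambda^{\aleph_0}$ many homomorphisms parameterised by strictly increasing $\omega$-sequences into~$\lambda$. For~(1), after~\ref{c7}(1) reduces to the reduced summand, the ``easy'' Cases~1--5 at the start of the proof of Theorem~\ref{no_cohopfian_1} settle the result whenever $|G| > 2^{\aleph_0}$, via Fact~\ref{fact6A}, Claims~\ref{the_crucial_claim},~\ref{quotient_withT_divisible} and~\ref{c7}, using no cardinal arithmetic beyond $|G|>2^{\aleph_0}$. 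In the remaining case $G$ is reduced and satisfies the decomposition hypothesis of~(2), with $\mathbb{A} = \{p : \mrm{Tor}_p(G) \text{ is infinite}\}$ and each $K_p$ extracted as in $(*_1)$ of the proof of Theorem~\ref{no_cohopfian_1}; moreover $|\mrm{Tor}(G)| \leq 2^{\aleph_0} < \lambda$ makes $|G/\mrm{Tor}(G)|^{\aleph_0} = \lambda^{\aleph_0}$ automatic. Part~(1) then follows from~(2) provided some $h$ among the $\lambda^{\aleph_0}$ many produced satisfies $(\star)$ of Observation~\ref{suff_cond}; I would arrange this uniformly — exactly as at the end of the proof of Theorem~\ref{no_cohopfian_1} — by forcing the ``zeroth block'' of every $h$ to carry the base-case endomorphism on $K_{(p,0)} \oplus K_{(p,1)}$ that hard-codes a universal witness~$z$.

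For~(2), I would mirror the skeleton of Theorem~\ref{no_cohopfian_1} but index by~$\lambda$ rather than~$\omega$. Fix a continuous increasing filtration $(G_\alpha : \alpha < \lambda)$ of~$G$ with $|G_\alpha| < \lambda$, $\mrm{Tor}(G) \subseteq G_0$, and each $G_\alpha \preccurlyeq_{\mathfrak{L}_{\aleph_1,\aleph_1}} G$, obtained by downward Löwenheim--Skolem subject to the (mild) cardinal bookkeeping required to keep $|G_\alpha|^{\aleph_0} < \lambda$; this yields $G_\alpha \leq_* G$ and the closure under $\omega$-systems of linear equations exploited in $(*_2)(d)$ of Theorem~\ref{no_cohopfian_1}. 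At each~$\alpha$ I pick $a_\alpha \in G_{\alpha+1} \setminus G_\alpha$, let $H_\alpha$ be the pure closure of $G_\alpha + \mathbb{Z}a_\alpha$ in~$G$, and split $H_\alpha = G_\alpha \oplus L_\alpha$ with $L_\alpha$ torsion-free of rank~$1$ by reproducing steps $(*_4)$--$(*_7)$ of Theorem~\ref{no_cohopfian_1} verbatim; reducedness of~$G$ together with hypothesis~(b) forces a prime $p_\alpha \in \mathbb{A}$ with $p_\alpha L_\alpha \neq L_\alpha$ and a witness $y_\alpha \in L_\alpha$ not $p_\alpha$-divisible in~$G$ (if every prime divided $L_\alpha$ then $L_\alpha \neq \{0\}$ would be divisible, contradicting reducedness; and $p_\alpha \notin \mathbb{A}$ would make $G/\mrm{Tor}_{p_\alpha}(G)$, hence the purely embedded $L_\alpha$, $p_\alpha$-divisible).

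For each strictly increasing $\eta \in {}^\omega\lambda$ I then define $h^\eta = \sum_{n<\omega} h^\eta_n \in \mrm{Hom}(G, K)$ by the recipe of $(*_9)$ in Theorem~\ref{no_cohopfian_1}: $h^\eta_n$ is an onto map $L_{\eta(n)} \to K_{(p_{\eta(n)}, n)}$ sending $y_{\eta(n)}$ to a fixed generator, extended by $0$ on $G_{\eta(n)}$ to $H_{\eta(n)} \to K_{(p_{\eta(n)}, n)}$, then extended to all of~$G$ via Fact~\ref{extending_gtoh} (algebraic compactness of the finite group $K_{(p_{\eta(n)}, n)}$, applicable since $H_{\eta(n)} \leq_* G$). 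Well-definedness of the sum follows, as in Theorem~\ref{no_cohopfian_1}, from $\bigcup_\alpha G_\alpha = G$ and the vanishing $h^\eta_n \restriction G_{\eta(n)} = 0$. The main obstacle — and the genuinely new step beyond Theorem~\ref{no_cohopfian_1} — is establishing the injectivity of $\eta \mapsto h^\eta$. I would arrange the choices of generators and of the algebraically compact extensions so that, for every $\alpha < \lambda$, $h^\eta(y_\alpha)$ is a fixed non-zero element of $K_{(p_\alpha, n)}$ when $\alpha = \eta(n)$ and distinguishable (for instance vanishing modulo that specific coordinate) when $\alpha \notin \mrm{ran}(\eta)$; a routine book-keeping ensures the extension freedom does not collapse too many $\eta$'s together. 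Since $|{}^\omega\lambda| = \lambda^{\aleph_0}$, this yields $|\mrm{Hom}(G, K)| \geq \lambda^{\aleph_0}$ and completes~(2).
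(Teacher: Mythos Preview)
Your reduction of (1) to (2) via the case analysis of Theorem~\ref{no_cohopfian_1} is correct and is exactly what the paper does. The gap is in your proof of (2), specifically in the well-definedness of $h^\eta = \sum_n h^\eta_n$. You argue that the sum is finite at each $x$ because $h^\eta_n \restriction G_{\eta(n)} = 0$ and $\bigcup_\alpha G_\alpha = G$; but this needs $\sup_n \eta(n) = \lambda$. The hypothesis $2^{\aleph_0} < \lambda < \lambda^{\aleph_0}$ does not force $\mrm{cf}(\lambda) = \aleph_0$ (e.g.\ $\lambda = \aleph_{\omega+1}$ with $\aleph_\omega^{\aleph_0} > \aleph_{\omega+1}$), and when $\mrm{cf}(\lambda) > \aleph_0$ every strictly increasing $\eta \in {}^\omega\lambda$ is bounded, so for $x \in G \setminus G_{\sup\eta}$ each $h^\eta_n(x)$ is determined only by the uncontrolled pure-injective extension of Fact~\ref{extending_gtoh} and there is no reason for it to vanish. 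Your side condition $|G_\alpha|^{\aleph_0} < \lambda$ cannot be arranged either in this regime (same example), so you are really only reproving Theorem~\ref{no_cohopfian_1}.

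The paper's mechanism for (2) is genuinely different. It first replaces $\lambda$ by the least $\mu \leq \lambda$ with $\mu^{\aleph_0} \geq \lambda$, shows $\mrm{cf}(\mu) = \aleph_0$ and $\mu = \sum_n \lambda_n$ with $\lambda_n = \lambda_n^{\aleph_0}$, and then uses only an $\omega$-chain $(G_n)_{n<\omega}$ with $|G_n| = \lambda_n$. The many homomorphisms come not from a long filtration but from a \emph{tree}: at level $n$ one produces $|S_n| = \prod_{\ell \leq n}\lambda_\ell$ rank-one summands $L_{(n,\eta)}$ and homomorphisms $f_{(n,\eta)}$ indexed by $\eta \in S_n$, and for each branch $\nu \in \prod_n \lambda_n$ sets $f_\nu = \sum_n f_{(n,\nu\restriction n)}$. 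Well-definedness is obtained not by making $f_{(n,\eta)}$ vanish on $G_n$ for all $\eta$, but by maintaining a sparse-support condition $(*_4)(f)$: for every $z$ and every $n$, at most countably many $\eta \in S_n$ have $f_{(n,\eta)}(z) \neq 0$. This is the real work, carried out via an approximation poset $\mrm{AP}_n$ and a Ramsey argument on the finite target $K_{(p_n,n)}$ when extending across a rank-one step. One then discards the set $\Lambda$ of branches along which the sum fails to be eventually zero at some $z$; $|\Lambda| \leq |G|\cdot 2^{\aleph_0} < \lambda^{\aleph_0}$, so $\lambda^{\aleph_0}$ branches survive, and injectivity of $\nu \mapsto f_\nu$ is immediate from the tree indexing. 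Your ``routine bookkeeping'' for injectivity is in fact this sparse-support machinery, and it is not routine.
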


	\begin{proof} We first prove (2). 
	\begin{enumerate}[$(*_0)$]
	\item W.l.o.g. $K_{(p, 0)}$ is as in the proof of \ref{no_cohopfian_2}, i.e., $K_{(p, 0)} = \mathbb{Z} y_{(p, 0)} \oplus \mathbb{Z} y_{(p, 1)}$, with $y_{(p, \ell)}$ of order $k_{(p, \ell)}$ with $1 \leq k_{(p, 0)} \leq k_{(p, 1)}$.
\end{enumerate}
\begin{enumerate}[$(*_1)$]
	\item Let $\mu = \mrm{min}\{\mu \leq \lambda : \mu^{\aleph_0} \geq \lambda\}$, then:
	\begin{enumerate}
	\item $\mu > 2^{\aleph_0}$;
	\item $\theta < \mu \Rightarrow \theta^{\aleph_0} < \mu$;
	\item $\mu^{\aleph_0} = \lambda^{\aleph_0}$;
	\item $\mrm{cf}(\mu) = \aleph_0$;
	\item W.l.o.g. $\mu = \sum_{n < \omega} \lambda_n$, $2^{\aleph_0} < \lambda_n = \lambda_n^{\aleph_0} < \lambda_{n+1}$;
	\item $|G/\mrm{Tor}(G)| \geq \mu$.
	\end{enumerate}
\end{enumerate}
Why (a)? As $\lambda > 2^{\aleph_0}$. Why (b)? If $\theta < \mu \leq \theta^{\aleph_0}$, then $\lambda \leq \mu^{\aleph_0} \leq (\theta^{\aleph_0})^{\aleph_0} = \theta^{\aleph_0 \aleph_0} = \theta^{\aleph_0}$, a contradiction. Why (c)? As $\lambda^{\aleph_0} \leq (\mu^{\aleph_0})^{\aleph_0} = \mu^{\aleph_0 \aleph_0} = \mu^{\aleph_0} \leq \lambda^{\aleph_0}$. Why (d)? If not then $\mu^{\aleph_0} = |\{ \eta: \eta \in \mu^{\omega} \}| = |\bigcup_{\alpha < \mu}\{ \eta: \eta \in \alpha^{\omega} \}| \leq \sum_{\alpha < \mu} |\alpha|^{\aleph_0} \leq \mu \times \mu = \mu \leq \lambda < \lambda^\aleph_0$. Why (e)? Because of (a)-(d). Why (f)? As $|G/\mrm{Tor}(G)|^{\aleph_0} = \lambda^{\aleph_0}$.

\begin{enumerate}[$(*_{1.5})$]
	\item Let $(x^{*}_\alpha + \mrm{Tor}(G) : \alpha < \lambda_* = |G/\mrm{Tor}(G)|)$ be a basis of $G/\mrm{Tor}(G)$;
	\end{enumerate}
\begin{enumerate}[$(*_2)$]
	\item Let $S_n = \prod_{\ell \leq n} \lambda_\ell$.
\end{enumerate}
\begin{enumerate}[$(*_3)$]
	\item We can find $(G_n, H_n, \bar{x}_n, p_n : n < \omega)$ such that:
	\begin{enumerate}[(a)]
	\item $\mrm{Tor}(G) \leq G_0$ and $G_n \preccurlyeq_{\mathfrak{L}_{\aleph_1, \aleph_0}} G$ (so $G_n \leq_* G$);
	\item $G_n \leq_* G_{n+1}$;
	\item $|G_n| = \lambda_n$;
	\item $G_n \leq_* H_n = G_n \oplus \bigoplus_{\eta \in S_n} L_{(n, \eta)} \leq_* G_{n+1}$, where $L_{(n, \eta)} = \langle x_{(n, \eta)} \rangle^*$;
	\item $p_n \in \mathbb{A}$ and $x_{(n, \eta)}$ is not divisible by $p_n$.
	\end{enumerate}
\end{enumerate}
We prove $(*_3)$. Let $G_0 \preccurlyeq_{\mathfrak{L}_{\aleph_1, \aleph_0}} G$ be of cardinality $\lambda_0$ (cf. \cite[Corollary~3.1.2]{dickman}). Suppose that $G_n$ was chosen, we shall choose $(G_{(n ,\alpha)}, x_{(n, \alpha)} : \alpha < \lambda^+_n)$ as follows: 
	\begin{enumerate}[$(\cdot_1)$]
	\item $G_{(n ,\alpha)} \preccurlyeq_{\mathfrak{L}_{\aleph_1, \aleph_0}} G$;
	\item $x_{(n, \alpha)} \in G \setminus G_{(n ,\alpha)}$ and such that $x_{(n, \alpha)} + G_{(n ,\alpha)} \notin \mrm{Tor}(G/G_{(n ,\alpha)})$;
	\item $G_n \cup \bigcup \{ G_{(n ,\beta)}, x_{(n ,\beta)} : \beta < \alpha\} \subseteq G_{(n ,\alpha)}$.
	\end{enumerate}
As in \ref{no_cohopfian_1}, since $\mrm{Tor}(G) \leq G_0$, w.l.o.g. $G_{(n, \alpha)} \oplus \langle x_{(n, \alpha)} \rangle^*_G \leq_* G$ and let $L_{(n, \alpha)} = \langle x_{(n, \alpha)} \rangle^*_G$.
Let $p_{(n, \alpha)} \in \mathbb{A}$ be such that $L_{(n, \alpha)}$ is not $p_{(n, \alpha)}$-divisible (recalling $G$ is reduced). W.l.o.g. $p_{(n, \alpha)} = p_n$, as $\lambda^+_n$ has uncountable cofinality. Lastly, let $H_n = \bigoplus_{\alpha < \lambda^+} L_{(n, \alpha)} \oplus G_{(n, \alpha)}$.
We can prove by induction on $\alpha \leq \lambda^+_n$ that $G_n \oplus \bigoplus_{\eta \in S_n} L_{(n, \eta)} \leq_*G$, so indeed $H_n \leq_* G_{n+1}$.
As $\lambda_n = |S_n|$ renaming we are done. Choose now $G_{n+1} \preccurlyeq_{\mathfrak{L}_{\aleph_1, \aleph_0}} G$ such that $|G_{n+1}| = \lambda_{n+1}$ and $\bigcup_{\alpha < \lambda^+} G_{(n, \alpha)} \leq G_{n+1}$. 
\begin{enumerate}[$(*_4)$]
	\item For $n < \omega$, let $\mrm{AP}_n$ be the set of $(H, \bar{f})$ such that:
	\begin{enumerate}[(a)]
	\item $H_n \leq H \leq_* G$;
	\item $\bar{f} = \{f_{(n, \eta)} : \eta \in S_n\}$;
	\item $f_{(n, \eta)} \in \mrm{Hom}(H, K_{(p_n, n)})$;
	\item $f_{(n, \eta)}(x_{(n, \nu)}) = 0$ iff $\eta \neq \nu$;
	\item $f_{(n, \eta)} \restriction (G_n)$ is $0$;
	\item if $z \in H$, then $|\{ \eta \in S_n : f_{(n, \eta)}(z) \neq 0 \}| \leq 2^{\aleph_0}$ and even $\leq \aleph_0$;
	\item if $n = 0$, so necessarily $\eta = ()$, then $f_{(0, \eta)}$ is as $(*_{9})(a)$ of the proof of \ref{no_cohopfian_1}.
	\end{enumerate}
\end{enumerate}	
\begin{enumerate}[$(*_{4.5})$]
\item Let $(H, \bar{f}) \leq{\mrm{AP}_n} (H', \bar{f}')$ be the natural order between objects as in $(*_{4})$, that is $H \leq H'$ and $\eta \in S_n$ implies $f_{(n, \eta)} \subseteq f'_{(n, \eta)}$.
\end{enumerate}
\begin{enumerate}[$(*_5)$]
	\item For $n < \omega$, $\mrm{AP}_n \neq \emptyset$.
\end{enumerate}
We prove $(*_5)$. Let $H = H_n$ and let $f_{(n, \eta)} \in \mrm{Hom}(H)$ be such that:
	\begin{enumerate}[(i)]
	\item $f_{(n, \eta)}$ is zero on $G_n$;
	\item $f_{(n, \eta)}$ is zero on $L_{(n, \nu)}$, for $\nu \in S_n \setminus \{\eta\}$;
	\item $\mrm{ran}(f_{(n, \eta)}) \leq K_{(p_n, n)}$;
	\item $f_{(n, \eta)}(x_{(n, \eta)}) \neq 0$.
\end{enumerate}	 
Why we can do this? Cf. $(*_9)$ of the proof of \ref{no_cohopfian_1}.
\begin{enumerate}[$(*_6)$]
	\item If $(H_1, \bar{f}_1) \in \mrm{AP}_n$, then we can find $H_1 \lneq_* H_2 \lneq_* G$ such that $H_2/H_1 \in \mrm{TFAB}$ is of rank $1$ and there is $(H_2, \bar{f}_2) \in \mrm{AP}_n$ such that:
	$$(H_1, \bar{f}_1) <_{\mrm{AP}_n} (H_2, \bar{f}_2) \text{ and } H_1 \nleq H_2.$$
\end{enumerate}
We prove $(*_6)$. Now, for every $\ell < \omega$, we can find $k_\ell < \omega$ and $y_\ell \in H_2$ such that:
\begin{equation}\tag{$\star$} a_\ell := k_\ell y_{\ell+1} - y_\ell \in H_1 \text{ and } H_2 = \bigcup_{\ell < \omega} (\mathbb{Z} y_\ell \oplus H_1).
\end{equation}
Now, for every $\ell < \omega$ and $\eta \in S_n$ we can find $f_{(n, \eta, \ell)} \in \mrm{Hom}(\mathbb{Z} x_\ell \oplus H_1, K_{(p_n, n)})$ extending $f^1_{(n, \eta)}$ such that $f_{(n, \eta, \ell)}(y_\ell) = 0$.
As $K_{(p_n, n)}$ is finite, for some infinite $\mathcal{U}_{(n, \eta)} \subseteq \omega$ we have that, for $\ell_1 < \ell_2 \in \mathcal{U}_{(n, \eta)}$, $f_{(n, \eta, \ell_2)}(y_{\ell_1})$ is constant (why? by the Ramsey Theorem applied on the coloring $f_{(n, \eta, \ell_2)}(y_{\ell_1}) \in K_{(p_n, n)}$). Now, $(f_{(n, \eta, \ell)} : \ell \in \mathcal{U}_{(n, \eta)})$ converges, i.e., if $(k_i : i < \omega)$ lists $\mathcal{U}_{(n, \eta)}$, then $f'_{(n, \eta, k_i)} = f_{(n, \eta, k_{i+1})} \restriction (\mathbb{Z} y_{k_i} \oplus H_i)$ is increasing.
Let $\bar{f}_2 = (f^2_{(n, \eta)} : \eta \in S_n)$, where we let $f^2_{(n, \eta)} = \bigcup \{f'_{(n, \eta, \ell)} : \ell \in \mathcal{U}_{(n, \eta)}\}$. So $(H_2, \bar{f}_2)$ is well-defined and easily it is as required, where the main point is checking $(*_4)(f)$ which is easy as we have:
\begin{enumerate}[$(*_{6.5})$]
	\item if $\eta \in S_n$ and $\bigwedge_{\ell < \omega} f^1_{(n, \eta)}(a_\ell) = 0$, then:
	\begin{enumerate}[(a)]
	\item if $\ell < m$, then $f_{(n, \eta, m)}(y_\ell) = 0$;
	\item $f^2_{(n, \eta)}(y_m) = 0$, for $m < \omega$;
	\item as in (b) for $k y_\ell + b$ ($k \in \mathbb{Z}$ and $b \in H_2$).
	\end{enumerate}
\end{enumerate}
Why? Clauses (b) and (c) are easy and clause (a) can be proved by downward induction on $\ell$, where for $\ell = 1$, the conclusion is true by choice and for $\ell-1$ we use $(\star)$. Hence, we are done proving $(*_6)$.
\begin{enumerate}[$(*_7)$]
	\item For each $n < \omega$ we can choose $\bar{f}_n$ such that $(G, \bar{f}_n) \in \mrm{AP}_n$.
\end{enumerate}
Why? By $(*_5)$ and $(*_6)$ (and their proof). We elaborate. By induction on $\alpha \leq \lambda_*$ we choose pairs $(H^n_\alpha, \bar{f}^n_\alpha)$ such that:
\begin{enumerate}[$(*_{7.5})$]
	\item
	\begin{enumerate}[$(a)$]
	\item $(H^n_\alpha, \bar{f}^n_\alpha) \in \mrm{AP}_n$;
	\item if $\beta < \alpha$, then $(H^n_\beta, \bar{f}^n_\beta) \leq_{\mrm{AP}} (H^n_\alpha, \bar{f}^n_\alpha)$;
	\item if $\alpha = \beta +1$, then $x^*_\beta \in H^n_\alpha$.
	\end{enumerate}
\end{enumerate}
Why we can carry the induction? For $\alpha = 0$, use $(*_5)$. For $\alpha = \beta + 1$, if $x^*_\beta \in H^n_\beta$, let $(H^n_\beta, \bar{f}^n_\beta) = (H^n_\alpha, \bar{f}^n_\alpha)$, while if $x^*_\beta \notin H^n_\beta$, use $(*_6)$. For $\alpha$ limit, let:
$$H^n_\alpha = \bigcup_{\beta < \alpha} H^n_\beta \text{ and } f^n_{(\alpha, \eta)} = \bigcup_{\beta < \alpha} f^n_{(\beta, \eta)}, \text{ for } \eta \in S_n.$$
Having carried the induction, by the definition of $\mrm{AP}_n$ and the choice of $(x^*_\alpha : \alpha < \lambda_*)$ necessarily $H^n_{\lambda_*} = G$ and so we are done proving $(*_7)$.
\begin{enumerate}[$(*_8)$]
	\item If $z \in G$, then $\Lambda_z = \{\nu \in \prod_{n < \omega} \lambda_n : \exists^\infty n (f_{(n, \nu \restriction n}(z) \neq 0 )\}$ has size $\leq 2^{\aleph_0}$.
\end{enumerate}
Why? For each $n < \omega$, $|\{\eta \in \prod_{\ell < n} \lambda_\ell : f_{(n, \eta)}(z) \neq 0\}| \leq \aleph_0$. 
\newline Let $\Lambda = \bigcup \{\Lambda_z : z \in G\}$, so clearly $\Lambda \subseteq \prod_{\ell < \omega} \lambda_\ell$ and $|\Lambda| \leq 2^{\aleph_0} + |G| < \lambda^{\aleph_0} = \prod_{\ell < \omega} \lambda_\ell$ (cf. $(*_1)(c)-(e)$). So for each $\nu \in \prod_{\ell < \omega} \lambda_\ell \setminus \Lambda$ we have:
	\begin{enumerate}[$(*_9)$]
	\item $f_\nu:G \rightarrow K$ defined by $f_\nu(z) = \sum \{f_{\nu \restriction n}(z) : n < \omega\}$
is well-defined.
	\end{enumerate}
Why? As for each $z \in G$ all but finitely many terms in the sum are zero. Hence:
	\begin{enumerate}[$(*_{10})$]
	\item if $\eta \neq \nu \in \prod_{\ell < \omega} \lambda_\ell \setminus \Lambda$, then:
	\begin{enumerate}[(a)]
	\item $f_\nu \in \mrm{Hom}(G, K)$;
	\item then $f_\nu \neq f_\eta$.
	\end{enumerate}
	\end{enumerate}
As necessarily $\prod_{\ell < \omega} \lambda_\ell \setminus \Lambda$ has cardinality $\prod_{\ell < \omega} \lambda_\ell \setminus \Lambda = \lambda^{\aleph_0}$ we are done proving Part (2) of the theorem. Concerning Part (1), note that for each $\nu \in \prod_{\ell < \omega} \lambda_\ell \setminus \Lambda$ we have that $(f_{\nu \restriction \ell} : \ell < \omega)$ is as in the proof of \ref{no_cohopfian_1}, where the only missing part is to justify that $f_\nu$ as in $(*_{9})$ is well-defined, which we do there. Hence,  for each $\nu \in \prod_{\ell < \omega} \lambda_\ell \setminus \Lambda$ we have that $f_\nu$ is as in \ref{suff_cond} and so $G$ is not co-Hopfian.
\end{proof}

	\begin{remark} Similarly to \ref{no_cohopfian_2} we can prove (A) implies (B), where:
	\begin{enumerate}[(A)]
	\item $G \in \mrm{TFAB}_\lambda$ is reduced, $\lambda^{\aleph_0} > \lambda > 2^{\aleph_0}$, $\emptyset \neq \mathbb{A} \subseteq \mathbb{P}$, $p \in \mathbb{A} \Rightarrow G$ is $p$-divisible, and $K = \bigoplus_{p \in \mathbb{A}} K_p$, with $K_p$ as in \ref{no_cohopfian_2}(2);
	\item $|\mrm{Hom}(G, K)| \geq \lambda^{\aleph_0}$.
	\end{enumerate}
\end{remark}

	\begin{ntheorem2} If $2^{\aleph_0} < \lambda < \lambda^{\aleph_0}$, and $G \in \mrm{AB}_\lambda$, then $G$ is not co-Hopfian.
\end{ntheorem2}

\begin{proof} Immediate by Theorem~\ref{no_cohopfian_2}.
	\end{proof}

\section{Absolutely Hopfian abelian groups}\label{last_sec}

	In reading Convention~\ref{conv_pp_frm} and subsequent items recall Notation~\ref{inf_logic_notation}.

	\begin{convention}\label{conv_pp_frm} By a positive conjunctive existential $\varphi(\bar{x}_n) \in \mathfrak{L}_{\infty, \aleph_0}(\tau_{\mrm{AB}})$ we mean a formula of $\mathfrak{L}_{\infty, \aleph_0}(\tau_{\mrm{AB}})$ which does not uses $\neg$, $\vee$ and $\forall$.
\end{convention}

\begin{fact}\label{fact_def_subgroup} Let $\varphi(\bar{x}_n) \in \mathfrak{L}_{\infty, \aleph_0}(\tau_{\mrm{AB}})$ be positive conjunctive existential and $G \in \mrm{AB}$. 
	\begin{enumerate}[(A)]
	\item $\varphi(G) = \{\bar{a} \in G^n : G \models \varphi[\bar{a}]\}$ is a subgroup of $G$;
	\item if $f \in \mrm{End}(G)$ and $G \models \varphi[\bar{a}]$, then $G \models \varphi[f(\bar{a})]$.
	\end{enumerate}
\end{fact}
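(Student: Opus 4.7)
The plan is to prove both (A) and (B) simultaneously by induction on the complexity of the positive conjunctive existential formula $\varphi$, which is well-founded since formulas in $\mathfrak{L}_{\infty, \aleph_0}(\tau_{\mrm{AB}})$ are built from atomic formulas by (possibly infinite) conjunction and (finite) existential quantification.

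For the base case I would handle atomic formulas. Every atomic $\tau_{\mrm{AB}}$-formula is equivalent to an equation $\sum_{i<n} k_i x_i = 0$ with $k_i \in \mathbb{Z}$. For (A), the solution set of such an equation is visibly a subgroup of $G^n$ (it is the kernel of the homomorphism $\bar{a} \mapsto \sum k_i a_i$). For (B), if $\sum k_i a_i = 0$, then $\sum k_i f(a_i) = f\bigl(\sum k_i a_i\bigr) = f(0) = 0$ since $f$ is a group endomorphism.

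For conjunction, suppose $\varphi(\bar{x}) = \bigwedge_{j \in J} \varphi_j(\bar{x})$ with each $\varphi_j$ positive conjunctive existential. By the inductive hypothesis each $\varphi_j(G)$ is a subgroup of $G^n$, and $\varphi(G) = \bigcap_{j \in J} \varphi_j(G)$ is then also a subgroup, giving (A). Part (B) is immediate: if $G \models \varphi_j[\bar{a}]$ for all $j$, then by induction $G \models \varphi_j[f(\bar{a})]$ for all $j$, so $G \models \varphi[f(\bar{a})]$.

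For existential quantification, $\varphi(\bar{x}) = \exists \bar{y}\, \psi(\bar{x}, \bar{y})$ (finite block, as the language is $\mathfrak{L}_{\infty, \aleph_0}$). By induction $\psi(G) \leq G^{n+m}$ is a subgroup; then $\varphi(G)$ is its image under the projection $\pi : G^{n+m} \to G^n$, which is a group homomorphism, so $\varphi(G) \leq G^n$ is a subgroup, giving (A). For (B), if $G \models \varphi[\bar{a}]$, pick a witness $\bar{b}$ so that $G \models \psi[\bar{a}, \bar{b}]$; by induction $G \models \psi[f(\bar{a}), f(\bar{b})]$, so $\bar{f}(\bar{b})$ witnesses $G \models \varphi[f(\bar{a})]$. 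No real obstacle arises here; the only point worth noting is that restricting to formulas without $\neg$, $\vee$, and $\forall$ is exactly what makes all three inductive steps go through (negation would break subgroup closure and endomorphism preservation, and universal quantifiers would fail both).
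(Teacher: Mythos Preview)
Your proof is correct and complete; the structural induction on positive conjunctive existential formulas is exactly the canonical argument. The paper itself does not spell out a proof: it simply cites \cite[Claim~2.3]{977} for clause (A) and declares clause (B) ``easy'', so your write-up is strictly more detailed than what appears in the paper while following the same underlying idea.
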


	\begin{proof} Clause (A) is by e.g. \cite[Claim~2.3]{977}. Clause (B) is easy.
	\end{proof}

	\begin{fact}\begin{enumerate}[(1)]
	\item If $\lambda$ is beautiful $>|R|$ and $M$ is an $R$-module of cardinality $\geq \lambda$, then $M$ is not absolutely co-Hopfian.
	\item If $\lambda$ is $<$ the first beautiful cardinal, then there is $G \in \mathrm{TFAB}$ of cardinality $\lambda$ which is absolutely endo-rigid (and thus Hopfian).	
\end{enumerate}
\end{fact}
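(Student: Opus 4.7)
The two parts of the fact have opposite flavors and I would handle them separately: Part (1) is a non-existence result powered by a Ramsey-type property, while Part (2) is an existence result powered by absolute rigidity.

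For Part~(1), the plan is to exploit the defining Ramsey property of a beautiful cardinal. Let $M$ be an $R$-module with $|M| \geq \lambda > |R|$. By the definition of ``beautiful,'' one can pass to a forcing extension $V[G]$ in which $M$ contains an infinite sequence $(a_n : n < \omega)$ of order-indiscernibles with respect to the positive conjunctive existential formulas of $\tau_{\mrm{AB}}$ (in the sense of Convention~\ref{conv_pp_frm}) enriched by the unary scalar operations coming from $R$. By Fact~\ref{fact_def_subgroup}(A)--(B), such indiscernibility means that the $\omega$-shift $a_n \mapsto a_{n+1}$ preserves every positive primitive relation, and hence extends in a well-defined way to an injective $R$-homomorphism on the submodule $N = \langle a_n : n < \omega\rangle_R$ whose image omits $a_0$. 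The main step is then to globalize: extend this shift to an injective $f \in \mrm{End}_R(M)$ whose range still avoids $a_0$. This is the main obstacle, and it is handled by the Ehrenfeucht--Mostowski machinery that accompanies the definition of beautiful: one writes $M$ as the value of an EM-functor applied to a sufficiently indiscernible sequence, and transports the shift on the indiscernibles to a global endomorphism. The resulting $f$ witnesses non-co-Hopfianity of $M$ in $V[G]$, so $M$ is not absolutely co-Hopfian.

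For Part~(2), the plan is to invoke (or reconstruct) Fuchs's construction from \cite{fuchs_indec}. One builds $G$ by transfinite induction of length $\lambda$, building a pure chain $(G_\alpha : \alpha < \lambda)$ of torsion-free abelian groups whose divisibility pattern encodes an \emph{absolutely rigid} combinatorial object of size $\lambda$ (for example, an absolutely rigid linear order or tree, or a suitable system of $p$-adic types), whose rigidity is preserved by every forcing. Such absolutely rigid objects exist in $V$ precisely below the first beautiful cardinal, since above that threshold the Ramsey phenomenon used in Part~(1) produces indiscernibles and hence non-trivial embeddings in some forcing extension, destroying any such rigidity. At each stage $\alpha$ one adjoins a new pure element whose $p$-divisibility type is prescribed by the combinatorial datum at $\alpha$; the design ensures that any $\varphi \in \mrm{End}(G)$ must preserve the rigid skeleton, and a standard calculation using purity (cf. Observation~\ref{obs_pure_TFAB}) then forces $\varphi$ to be a scalar multiplication $x \mapsto mx$ for some $m \in \mathbb{Z}$. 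Since the skeleton is absolutely rigid, the same analysis goes through in every forcing extension, so $G$ is absolutely endo-rigid and in particular absolutely Hopfian. The main delicate point here is the \emph{absoluteness} of the combinatorial input: one must ensure that the rigidity of the encoded object is not merely a property of $V$ but is preserved under all forcing, which is exactly why the bound ``below the first beautiful cardinal'' is tight.
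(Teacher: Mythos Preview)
The paper does not prove this statement at all: it is recorded as a \emph{Fact} from the literature, and the paper's entire proof consists of the two citations ``(1) is by the proof of \cite[Theorem~4]{678}. (2) is by \cite{abs_ind}.'' So your proposal is not competing with an argument in the paper but rather sketching what those cited papers do.

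Your sketch for Part~(1) is broadly faithful to the approach of Eklof--Shelah \cite{678}: the partition property defining a beautiful cardinal yields, in a suitable forcing extension, a sequence of indiscernibles inside (an EM-hull containing) $M$, and the shift on indiscernibles induces an injective, non-surjective endomorphism. One caution: the passage from ``shift on $\langle a_n : n<\omega\rangle$'' to a global $f\in\mrm{End}_R(M)$ is not automatic from indiscernibility alone; in \cite{678} this is arranged by building $M$ (or a copy of it) as the Skolem hull of the indiscernible sequence in a suitable expansion, so that the order-automorphism of the spine extends to an elementary map of the whole structure. Your phrase ``transports the shift on the indiscernibles to a global endomorphism'' gestures at this, but that step is the heart of the argument and should not be left implicit.

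For Part~(2) your attribution is off. Fuchs \cite{fuchs_indec} constructed endo-rigid groups below the first beautiful cardinal, but not \emph{absolutely} endo-rigid ones; the absoluteness is the contribution of G\"obel--Shelah \cite{abs_ind}, which is what the paper cites. The mechanism you describe---encoding an absolutely rigid combinatorial object (a rigid family of trees, available precisely below the first beautiful cardinal) into the divisibility types of a torsion-free group---is indeed the idea of \cite{abs_ind}, so your plan is correct in substance once the reference is adjusted.
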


	\begin{proof} (1) is by the proof of \cite[Theorem~4]{678}. (2) is by \cite{abs_ind}.
\end{proof}

	The use of the forcing $\mrm{Levy}(\aleph_0, |G|)$ in the proof of Theorem~\ref{first_main_th} is justified by:

	\begin{fact} For given $G \in \mrm{TFAB}_\lambda$, the following are equivalent:
	\begin{enumerate}[(a)]
	\item $\mrm{Levy}(\aleph_0, \lambda)$ forces ``$G$ is not Hopfian'';
	\item some forcing $\mathbb{P}$ forces ``$G$ is not Hopfian'';
	\item every forcing $\mathbb{P}$ collapsing $\lambda$ to $\aleph_0$ forces ``$G$ is not Hopfian''.
	\end{enumerate}
\end{fact}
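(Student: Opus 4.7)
The implications $(a) \Rightarrow (b)$ and $(c) \Rightarrow (a)$ are immediate: for the first take $\mathbb{P} := \mrm{Levy}(\aleph_0, \lambda)$, and for the second note that $\mrm{Levy}(\aleph_0, \lambda)$ itself collapses $\lambda$ to $\aleph_0$. All the content is in $(b) \Rightarrow (c)$, and my plan for that direction is the standard product-forcing plus absoluteness argument.

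Assume $\mathbb{P}$ forces ``$G$ is not Hopfian'' and let $\mathbb{Q}$ be an arbitrary forcing collapsing $\lambda$ to $\aleph_0$. I would fix a $V$-generic $\Gamma \subseteq \mathbb{P} \times \mathbb{Q}$ and write $\Gamma = \Gamma_\mathbb{P} \times \Gamma_\mathbb{Q}$; by commutativity of product forcing one has $V[\Gamma] = V[\Gamma_\mathbb{P}][\Gamma_\mathbb{Q}] = V[\Gamma_\mathbb{Q}][\Gamma_\mathbb{P}]$. From the first presentation, the hypothesis furnishes in $V[\Gamma_\mathbb{P}]$ a surjective, non-injective $f \in \mrm{End}(G)$, and this $f$ remains a witness in the larger extension $V[\Gamma]$ since being a homomorphism, being onto, and failing to be $1$-to-$1$ are absolute properties of a fixed function on the fixed set $G$. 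From the second presentation, $G$ is already countable in $V[\Gamma_\mathbb{Q}]$ because $\mathbb{Q}$ collapses $\lambda = |G|$ to $\aleph_0$, so one may fix there a real $r_G$ coding $G$ as a structure on $\omega$. It remains to transfer non-Hopfianity of $G$ downward from $V[\Gamma_\mathbb{Q}][\Gamma_\mathbb{P}]$ to $V[\Gamma_\mathbb{Q}]$; once this is done, $\Gamma_\mathbb{Q}$ being an arbitrary $V$-generic for $\mathbb{Q}$ yields $(c)$.

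This last transfer is the point that deserves care and is really the only obstacle. Once $G$ is coded by the real $r_G$, the assertion ``$G$ is not Hopfian'' is $\Sigma^1_1(r_G)$ --- there exists $f \in \omega^\omega$ coding a surjective, non-injective group endomorphism of $r_G$, with the bracketed condition arithmetic in $f$ and $r_G$ --- and naively $\Sigma^1_1$ is only upward absolute, which is the wrong direction. The standard fix is Shoenfield absoluteness: $\Sigma^1_2$ statements with real parameters in the ground model are absolute between any transitive model of $\mrm{ZFC}$ and any of its set-forcing extensions, and $\Sigma^1_1 \subseteq \Sigma^1_2$; applying this to $V[\Gamma_\mathbb{Q}] \subseteq V[\Gamma_\mathbb{Q}][\Gamma_\mathbb{P}]$ with parameter $r_G \in V[\Gamma_\mathbb{Q}]$ delivers the downward step. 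A self-contained alternative is to encode non-Hopfianity of $G$ as ill-foundedness of an explicit tree $T_G \in V[\Gamma_\mathbb{Q}]$ of finite partial approximations to a surjective non-injective endomorphism, and then invoke the absoluteness of well-foundedness of trees on $\omega$ between transitive models; either route completes the argument.
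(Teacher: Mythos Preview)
Your argument is correct and is the standard one; the paper itself states this result as a Fact without proof, so there is nothing to compare against.

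One small remark on presentation: you describe $\Sigma^1_1$ as ``only upward absolute'' and then invoke Shoenfield to get the downward direction, but in fact $\Sigma^1_1$ statements with a real parameter are already absolute between transitive models of $\mathrm{ZF}$ by Mostowski absoluteness --- precisely via the tree argument you sketch as an ``alternative.'' So your two routes are really the same route, and you do not need the full strength of Shoenfield. This is cosmetic; the proof stands.
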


	\begin{convention}\label{convention_Levy} In the proof below by ``absolutely if $f \in \mrm{End}(G)$, then...'' we mean that the forcing $\mrm{Levy}(\aleph_0, |G|)$ forces the statement ``if $f \in \mrm{End}(G)$, then ...''.
\end{convention}

\begin{ntheorem3} For all $\lambda \in \mrm{Card}$ there is $G \in \mrm{TFAB}_\lambda$ which is absolutely Hopfian.
\end{ntheorem3}

	\begin{proof} Let $\lambda$ be an infinite cardinal. We want to construct $G \in \mrm{TFAB}_\lambda$ which is absolutely Hopfian. To this extent, let:
	\begin{enumerate}[(a)]
	\item for $n < \omega$, $\mrm{decr}_n(\lambda) = \{\eta : \eta \text{ is a decreasing $n$-sequence of ordinals} < \lambda \}$;
	\item $\mrm{decr}_{\geq 2}(\lambda) = \bigcup_{2 \leq n < \omega} \mrm{decr}_n(\lambda)$;
	\item $\mrm{decr}(\lambda) = \bigcup_{n < \omega} \mrm{decr}_n(\lambda)$.
	\end{enumerate}
	\begin{enumerate}[$(*_1)$]
	\item let $p_1, p_2$, $p_{(1, n)}$ ($n \geq 1$), $p_{(2, n)}$ ($n \geq 1$), $q_{(1, n)}$ ($n \geq 1$), $q_{(2, n)}$ ($n \geq 1$) be pairwise distinct primes (notice that we can replace $\mathbb{Z}$ by a ring $R$ with such primes, certainly if $R$ is an integral domain);
\end{enumerate}
\begin{enumerate}[$(*_2)$]
	\item Let $(\eta_\alpha : \alpha < \lambda)$ list $\mrm{decr}_{\geq 2}(\lambda)$ with no repetitions.
\end{enumerate}
	\begin{enumerate}[$(*_3)$]
	\item Now, we define:
	\begin{enumerate}[$(\cdot_1)$]
	\item $H_2 = \{\mathbb{Q}x_{(\ell, \alpha)} : \alpha < \lambda, \ell \in \{1, 2\}\}$;
\item $H_0 = \{\mathbb{Z}x_{(\ell, \alpha)} : \alpha < \lambda, \ell \in \{1, 2\}\}$.
	\end{enumerate}
\end{enumerate}
	\begin{enumerate}[$(*_4)$]
	\item For $\eta \in \mrm{decr}(\lambda) \setminus \{()\}$ and $\ell \in \{1, 2\}$ let $x_{(\ell, \eta)}$ be:
	\begin{enumerate}[$(\cdot_1)$]
	\item $x_{(\ell, (\alpha))} = x_{(\ell, \alpha)}$, for $\alpha < \lambda$;
	\item $x_{(\ell, \eta)} = x_{(3-\ell, \beta)}$, when $\beta < \lambda$ and $\eta = \eta_\beta$ (recall $(*_2)$).
	\end{enumerate}
\end{enumerate}
	\begin{enumerate}[$(*_5)$]
	\item Let $G = H_1 \leq H_2$ be generated by $X_1 \cup X_2 \cup X_3 \cup X_4 \cup X_5 \cup X_6$, where:
	$$X_1 = \{p^{-m}_1 x_{(1, \alpha)} : \alpha < \lambda, m < \omega\};$$
	$$X_2 = \{p^{-m}_2 x_{(2, \alpha)} : \alpha < \lambda, m < \omega\};$$
	$$X_3 = \{p^{-m}_{(1, n)} (x_{(1, \eta)} - x_{(1, \nu)}) : n \geq 1, \eta \in \mrm{decr}_n(\lambda), \nu \in \mrm{decr}_{n+1}(\lambda), \eta \triangleleft \nu, m < \omega\};$$
	$$X_4 = \{p^{-m}_{(2, n)} (x_{(2, \eta)} - x_{(2, \nu)}) : n \geq 1, \eta \in \mrm{decr}_n(\lambda), \nu \in \mrm{decr}_{n+1}(\lambda), \eta \triangleleft \nu, m < \omega \};$$
	$$X_5 = \{q^{-m}_{(1, n)} x_{(1, \eta)} : \eta \in \mrm{decr}_{\geq n}(\lambda), m < \omega, 2 \leq n < \omega \};$$
	$$X_6 = \{q^{-m}_{(2, n)} x_{(2, \eta)} : \eta \in \mrm{decr}_{\geq n}(\lambda), m < \omega, 2 \leq n < \omega \}.$$
\end{enumerate}
\begin{enumerate}[$(*_6)$]
	\item For $\ell \in \{1, 2 \}$ and $\alpha < \lambda$, let:
	\begin{enumerate}[$(a)$]
	\item $G^0_\ell = \langle X_\ell \rangle_G$, $G_\ell = \langle X_\ell \rangle^*_G$, where $X_\ell = \{x_{(\ell, \beta)} : \beta < \lambda\}$;
	\item $G^0_{(\ell, \alpha)} = G^0_{(\ell, \alpha, 1)} = \langle \{ x_{(\ell, \beta)} : \alpha \leq \beta < \lambda\}\rangle_G$ and $G_{(\ell, \alpha)} = G_{(\ell, \alpha, 1)} = \langle \{ x_{(\ell, \beta)} : \alpha \leq \beta < \lambda\}\rangle^*_G = \langle \{ x_{(\ell, \beta)} : \mrm{lg}(\eta_\beta) = 1 \text{ and } \alpha \leq \mrm{min}(\mrm{ran}(\eta_\beta))\} \rangle^*_G$;
	\item for $n \geq 2$, $G_{(\ell, \alpha, n)} = \langle \{ x_{(3-\ell, \beta)} : \mrm{lg}(\eta_\beta) = n \text{ and } \alpha \leq \mrm{min}(\mrm{ran}(\eta_\beta))\} \rangle^*_G$.
	\end{enumerate}
\end{enumerate}
Notice that:
\begin{enumerate}[$(*_{7})$]	
	\item
\begin{enumerate}[(a)]
	\item $G = \langle G_1 \oplus G_2 \rangle^*_G$;
	\item $G_{(\ell, \alpha, n)} \leq_* G$;
	\item for $\ell \in \{1, 2\}$ and $n \geq 1$, the sequence $(G_{(\ell, \alpha, n)} : \alpha < \lambda)$ is $\subseteq$-decreasing, continuous and with intersection $\{ 0 \}$.
\end{enumerate}
\end{enumerate}
\begin{enumerate}[$(*_8)$]
	\item For $\ell \in \{1, 2 \}$ let $\psi_\ell(x) = \bigwedge_{n < \omega} p^n_\ell  \, \vert \,x$.
\end{enumerate}
\begin{enumerate}[$(*_9)$]
	\item For $\ell \in \{1, 2 \}$ we have:
	\begin{enumerate}[$(a)$]
	\item $\psi_\ell(x)$ is a formula in $\mathfrak{L}_{\aleph_1, \aleph_0}(\tau_{\mrm{AB}})$;
	\item $\psi_\ell(x)$ is positive conjunctive existential;
	\item $\psi_\ell(G) = G_\ell$;
	\item if $f \in \mrm{End}(G_\ell)$, then $f$ maps $G_\ell$ into $G_\ell$.
	\end{enumerate}
\end{enumerate}
	[Why? Clauses (a), (b) are clear, clause (d) follows by clause (c) and Fact~\ref{fact_def_subgroup}(B), and clause (c) is clear by Fact~\ref{fact_def_subgroup}(A) and the definitions, recalling that if $L \in \mrm{TFAB}$, and $p$ is a prime, then the $p^\infty$-divisible elements of $L$ form a pure subgroup of~$L$.]
\begin{enumerate}[$(*_{10})$]
	\item For $1 \leq n < \omega$ and $\ell \in \{1, 2\}$, by induction on $\alpha < \lambda$ we define $\varphi_{(\ell, \alpha, n)}(x)$ as:
	\begin{enumerate}[$(a)$]
	\item if $\alpha = 0$ and $n =1$, then $\varphi_{(\ell, \alpha, n)}(x) = \psi_\ell(x)$;
	\item if $\alpha = 0$ and $n > 1$, then $\varphi_{(\ell, \alpha, n)}(x) = \psi_{3-\ell}(x) \wedge \bigwedge_{m \geq 1} \exists y(q^m_{(\ell, n)} y = x)$;
	\item if $\alpha > 0$, then $\varphi_{(\ell, \alpha, n)}(x)$ is the formula:
	$$\bigwedge_{\beta < \alpha} \exists y (\varphi_{(\ell, \beta, n+1)}(y) \wedge p^\infty_{(\ell, n)} \vert \, (x-y) \wedge \varphi_{(\ell, \beta, n)}(x) \wedge q^\infty_{(\ell, n+1)} \, \vert \,y);$$
	\item $\varphi^*_{(\ell, 0, n)}(x) =\varphi_{(\ell, 0, n)}(x)$ and $\alpha > 0 \Rightarrow \varphi^*_{(\ell, \alpha, n)}(x) = \bigvee_{m \geq 1} \varphi_{(\ell, \alpha, n)}(mx)$.
	\end{enumerate}
\end{enumerate}
\begin{enumerate}[$(*_{10.5})$]
	\item $\varphi^*_{(\ell, \alpha, n)}(G)$ is the pure closure of $\varphi_{(\ell, \alpha, n)}(G)$ (we shall use this freely).
\end{enumerate}
\begin{enumerate}[$(*_{11})$]
	\item
	\begin{enumerate}[$(a)$]
	\item $\varphi_{(\ell, \alpha, n)}(x) \in \mathfrak{L}_{\lambda, \aleph_0}(\tau_{AB})$ is positive conjunctive existential;
	\item $\varphi^*_{(\ell, 0, 1)}(G) = G_{(\ell, 0, 1)} = G_{(\ell, 0)} = G_\ell$;
	\item $\varphi^*_{(\ell, \alpha, 1)}(G) = G_{(\ell, \alpha, 1)} = G_{(\ell, \alpha)}$;
	\item $\varphi^*_{(\ell, \alpha, n)}(G) = G_{(\ell, \alpha, n)}$.
\end{enumerate}
\end{enumerate}
We prove $(*_{11})$ by induction on $\alpha < \lambda$.
\newline \underline{Case 1}. $\alpha = 0$. Easy.
\newline \underline{Case 2}. $\alpha$ limit. Easy.
\newline \underline{Case 3}. $\alpha = \beta+1$.
\newline \underline{Case (a) of $(*_{11})$}. Just read the definition of $\varphi_{(\ell, \alpha, 1)}$.
\newline \underline{Case (b) of $(*_{11})$}. Just read the definition of $\varphi_{(\ell, \alpha, 1)}$ and $\varphi^*_{(\ell, \alpha, 1)}$.
\newline \underline{Case (c), (d) of $(*_{11})$}. The proofs of (c) and (d) are similar, so we write only the proof of (c). Note that proving (c) we use clauses (c) and (d) for all $\beta < \alpha$. 
\newline Thus, we want to prove:
\begin{enumerate}[(i)]
	\item if $\gamma \in [\alpha, \lambda)$, then $x_{(\ell, \gamma)} \in \varphi_{(\ell, \alpha, 1)}(G)$;
	\item if $x \in H_0$ (cf. $(*_3)$) and $x \in \varphi_{(\ell, \alpha, 1)}(G)$, then $x \in G_{(\ell, \alpha, 1)}$.
\end{enumerate}
We prove (i). We have to show that letting $x = x_{(\ell, \gamma)}$ for every $\beta_1 \leq \beta$ we have:
\begin{equation} \tag{$\star_1$} G \models \exists y (\varphi_{(\ell, \beta_1, 2)}(y) \wedge p^\infty_{(\ell, 1)} \vert \, (x-y) \wedge \varphi_{(\ell, \beta_1, 1)}(x) \wedge q^\infty_{(\ell, 2)} \, \vert \,y).
\end{equation}
Hence, we have to find a witness for $(\star_1)$, to this extent we let $y = x_{(\ell, (\gamma, \beta_1))}$ (cf. $(*_4)(\cdot_2)$) and show that this choice of $y$ is as wanted. Now, the first conjunct $\varphi_{(\ell, \beta_1, 2)}(y)$ holds by the inductive hypothesis noticing that $y = x_{(\ell, (\alpha, \beta_1))} \in G_{(\ell, \beta_1, 2)}$ (and recalling that we are doing an induction on $\alpha$ for all $1 \leq n < \omega$ for clauses (c) and (d) simultaneously).
The second conjunct $p^\infty_{(\ell, 1)} \vert \, (x-y)$ holds by the choice of $G$ (cf. $X_3$ and $X_4$ of $(*_5)$).
The third conjunct $\varphi_{(\ell, \beta_1, 1)}(x)$ holds by the inductive hypothesis (as $x = x_{(\ell, \gamma)} \in G_\ell$).
Finally, the fourth conjunct $q^\infty_{(\ell, 2)} \, \vert \,y$ holds by the choice of $G$ (cf. $X_5$ and $X_6)$ of $(*_5)$. This concludes the proof of (i).

\smallskip

\noindent We now prove (ii). So let $x \in H_0$ and $x \in \varphi_{(\ell, \alpha, 1)}(G)$, we want to show that $x \in G_{(\ell, \alpha, 1)}$. Clearly $x \in \varphi_{(\ell, \alpha, 1)}(G)$ implies that $x \in G_{\ell}$, in fact as $x \in \varphi_{(\ell, \alpha, 1)}(G)$ in particular $G \models \varphi_{(\ell, \beta, 1)}(x)$ (as this is the third conjunct of $\varphi_{(\ell, \alpha, 1)}$, see $(\star_1)$ above with $\beta_1 = \beta$), so by the inductive hypothesis we have that $x \in G_\ell$ and in fact as $x \in H_0$ we have that $x \in G^0_\ell$ (cf. $(*_6)(b)$). Now, toward contradiction assume $x \notin G_{(\ell, \alpha, 1)}$, so $x \neq 0$. As $x \in G^0_\ell = \langle \{x_{(\ell, \gamma)} : \gamma < \lambda \rangle_G$ and $x \neq 0$ there are $k < \omega$ and $\alpha_0 < \cdots < \alpha_k < \lambda$ such that we have the following equation:
\begin{equation} \tag{$\star_2$} x = \sum_{i \leq k} n_i x_{(\ell, \alpha_i)},
\end{equation}
with $n_i \in \mathbb{Z} \setminus \{0\}$. Now, if $\alpha_0 \geq \alpha$ we get the desired conclusion, so we assume that $\alpha_0 < \alpha$. Now, if $\alpha_0 < \beta$, clearly $x \notin G_{(\ell, \beta, 1)}$, as $\{x_{(\ell, \gamma)} : \gamma \in [\beta, \lambda)\}$ is a basis of $G_{(\ell, \beta, 1)}$, but this contradicts the inductive hypothesis. Hence,  w.l.o.g. we can assume that $\alpha_0 = \beta$. Now, as $x \in \varphi_{(\ell, \alpha, 1)}(G)$ and $\beta < \alpha$ there is $y_0 \in G$ such that:
\begin{equation} \tag{$\star_3$} G \models \varphi_{(\ell, \beta, 2)}(y_0) \wedge p^\infty_{(\ell, 1)} \vert \, (x-y_0) \wedge \varphi_{(\ell, \beta, 1)}(x) \wedge q^\infty_{(\ell, 2)} \, \vert \,y_0.
\end{equation}
Also, for some $m < \omega$ we have that $y = my_0 \in H_0$ and easily we have:
\begin{equation} \tag{$\star_4$} G \models \varphi_{(\ell, \beta, 2)}(y) \wedge p^\infty_{(\ell, 1)} \vert \, (mx-y) \wedge \varphi_{(\ell, \beta, 1)}(mx) \wedge q^\infty_{(\ell, 2)} \, \vert \,y.
\end{equation}
Now, by the fact that $G \models q^\infty_{(\ell, 2)} \, \vert \,y$ and $y  \in H_0$ there are pairwise distinct $\eta_0, ..., \eta_{i-1} \in \mrm{decr}_2(\lambda)$ and $m_j \in \mathbb{Q}$ such that we have the following:
\begin{equation} \tag{$\star_5$}y = \sum_{j < i} m_j x_{(\ell, \eta_j)}.
\end{equation}
By $(\star_4)$, $G \models p^\infty_{(\ell, 1)} \vert (mx - y)$. Now, $\{z \in G : p^\infty_{(\ell, 1)} \vert z\}$ is a pure subgroup of $G$ and its intersection with $H_0$ is generated by (recalling that $x_{(\ell, (\xi))} = x_{(\ell, \xi)}$, cf. $(*_4)(\cdot_1)$):
\begin{equation} \tag{$\star_6$}  \{x_{(\ell, (\zeta))} - x_{(\ell, (\zeta, \epsilon))} : \epsilon < \zeta < \lambda\}.
\end{equation}
Why $(\star_6)$? By $X_3$ and $X_4$ in $(*_5)$. So for some $\epsilon_j < \zeta_j < \lambda$, with $j < j_*$, we have:
\begin{equation} \tag{$\star_7$} 
	mx-y = \sum_{j < j_*} n'_j (x_{(\ell, \zeta_j)} - x_{(\ell, (\zeta_j, \epsilon_j)}),
\end{equation}
for $n'_j \in \mathbb{Z} \setminus \{0\}$. Also, by $(\star_2)$ and $(\star_5)$ we have that:
\begin{equation} \tag{$\star_8$} 
 mx-y = m\sum_{i \leq k} n_i x_{(\ell, \alpha_i)} - \sum_{j < i} m_j x_{(\ell, \eta_j)}.
\end{equation}
Recall also (crucially) that we are under the following assumption:
\begin{equation} \tag{$\star_9$} 
\alpha_0 = \beta.
\end{equation}
W.l.o.g. $(\zeta_j : j < j_*)$ is non decreasing and $j_1 < j_2 \wedge \zeta_1 = \zeta_2$ implies $\epsilon_{j_1} < \epsilon_{j_2}$. We now compare the supports in $(\star_7)$ and $(\star_8)$. There are three cases:
\newline \underline{Case A}. $\zeta_0 < \beta$.
\newline In this case $x_{(\ell, \zeta_0)}$ appears in $(\star_7)$ but not in $(\star_8)$ (recall $\alpha_0 = \beta$), a contradiction.
\newline \underline{Case B}. $\zeta_0 > \beta$.
\newline In this case  $x_{(\ell, \beta)}$ appears in $(\star_8)$ but not in $(\star_7)$ (recall $\alpha_0 = \beta$), a contradiction.
\newline \underline{Case C}. $\zeta_0 = \beta = \alpha_0$.
\newline In this case we compare for $\nu \in \mrm{decr}_{2}(\lambda)$ when $x_{(\ell, \nu)}$ is in the support of $(\star_7)$ and when it is in the support of $(\star_8)$. We restrict ourselves to the case $\nu = (\zeta_0, \epsilon_0) = (\beta, \epsilon_0)$. As $x_{(\zeta_0, \epsilon_0)}$ appears in $(\star_7)$ it has to appear also in $(\star_8)$, so for some $j < i$ we have that $\eta_j = (\beta, \epsilon_0)$, so $x_{(\ell, \eta_j)}$ appears in the support of $y$, but, by $(\star_4)$, $G \models \varphi_{(\ell, \beta, 2)}(y)$ and so we get a contradiction to clause (d) for $\beta$, as $\epsilon_0 < \beta = \zeta_0$ (recalling that $\nu \in \mrm{decr}_2(\lambda)$). This concludes the proof of $(*_{11})$.

\smallskip

\noindent From here on we may work in $\mathbf{V}^{\mrm{Levy}(\aleph_0, \lambda)}$, toward proving that $G$ is absolutely Hopfian, alternatively all the claims below about $f \in \mrm{End}(G)$ can be considered as absolute statements in the sense of Convention~\ref{convention_Levy}.
\begin{enumerate}[$(*_{12})$]
	\item if $f \in \mrm{End}(G)$, $\ell \in \{1, 2\}$ and $\alpha < \lambda$, then:
	\begin{enumerate}[$(\alpha)$]
	\item $f$ maps $G_{(\ell, \alpha)}$ into $G_{(\ell, \alpha)}$;
	\end{enumerate}	
	\begin{enumerate}[$(\beta)$]
	\item $f$ maps $G_{(\ell, \alpha, n)}$ into $G_{(\ell, \alpha, n)}$;
	\end{enumerate}
\end{enumerate}
[Why? By Fact~\ref{fact_def_subgroup}(B), $(*_{11})$ and: $\varphi^*_{(\ell, \alpha, n)}(G)$ is the pure closure of $\varphi_{(\ell, \alpha, n)}(G)$.]
\begin{enumerate}[$(*_{12.5})$]
\item if $f \in \mrm{End}(G)$, then there is a unique $\hat{f} \in \mrm{End}(H_2)$ extending $f$. 
\end{enumerate}
Why? This is because $H_2$ is the divisible hull of $G$ (so $H_2/G$ is torsion) and by the following fact: if $L_1 \leq L_2 \in \mrm{TFAB}$, $L_2/L_1$ is torsion, $L_2$ is divisible and $f \in \mrm{End}(L_1)$, then $f$ has exactly one extension to a map in $\mrm{End}(L_2)$.
\begin{enumerate}[$(*_{13})$]
	\item if $f \in \mrm{End}(G)$ is onto, then $f$ maps $G_{\ell}$ onto $G_{\ell}$.
\end{enumerate}
[Why? Let $\ell \in \{1, 2\}$ and $x \in G_\ell$, so for some $y \in G$ we have $f(y) = x$. As $y \in G$, for some $q_1, q_2 \in \mathbb{Q}$ and $y_1 \in G_1$, $y_2 \in G_2$ we have that $y = q_1y_1 + q_2y_2$ (recall that $G = \langle G_1 + G_2 \rangle^*_G \leq H_2$ so that $q_\ell y_\ell$ make sense). Thus, $f(y) = \hat{f}(y) = \hat{f}(q_1y_1 + q_2y_2) = q_1\hat{f}(y_1) + q_2\hat{f}(y_2) = q_1\hat{f}(y_1) + q_2\hat{f}(y_2) = q_1f(y_1) + q_2f(y_2)$, but, as $x = f(y) \in G_\ell$ and $G_\ell \cap G_{3 -\ell} = \{0\}$, necessarily $q_{3-\ell} = 0$ so that $y = q_\ell y_\ell \in G_\ell$.]
\begin{enumerate}[$(*_{14})$]
	\item if $f \in \mrm{End}(G)$ is onto and  $\alpha < \lambda$, then $f$ maps $G_{(\ell, \alpha)} \setminus G_{(\ell, \alpha+1)}$ into itself.
\end{enumerate}
[Why? We prove this by induction on $\alpha < \lambda$. By the inductive hypothesis and $(*_7)(c)$ $f$ maps $G_{\ell} \setminus G_{(\ell, \alpha)}$ into itself, so by $(*_{13})$ we have that:
\begin{enumerate}[$(*_{14.5})$]
	\item $f$ maps $G_{(\ell, \alpha)}$ onto $G_{(\ell, \alpha)}$.
\end{enumerate}
By $(*_{14.5})$, $x_{(\ell, \alpha)} \in \mrm{ran}(f \restriction G_{(\ell, \alpha)})$, let then $z \in G_{(\ell, \alpha)}$ be such that $f(z) = x_{(\ell, \alpha)}$. Now, $x_{(\ell, \alpha)} \notin G_{(\ell, \alpha+1)}$ by $(*_{6})(b)$ and $(*_{3})$, so $f(z) = x_{(\ell, \alpha)} \notin G_{(\ell, \alpha+1)}$. 
As $z \in G_{(\ell, \alpha)}$ and $G_{(\ell, \alpha)} = \langle G_{(\ell,\alpha+1)} \cup \{ x_{(\ell, \alpha)} \}\rangle^*_G$,
necessarily for some rational $q \neq 0$ and $b \in G_{(\ell, \alpha+1)}$ we have that $z = qx_{(\ell, \alpha)} + b$.
 This implies the following: 
$$\begin{array}{rcl}
y \in G_{(\ell, \alpha)} \setminus G_{(\ell, \alpha+1)} & \Rightarrow  & 
		\exists q_y \in \mathbb{Q}^+, y \in q_y x_{(\ell, \alpha)} + G_{(\ell, \alpha+1)} \\
  & \Rightarrow & f(y) \in q q_y x_{(\ell, \alpha)} + G_{(\ell, \alpha+1)} \\
  & \Rightarrow & f(y) \in G_{(\ell, \alpha)} \setminus G_{(\ell, \alpha+1)}.
\end{array}$$
So $f$ maps $G_{(\ell, \alpha)} \setminus G_{(\ell, \alpha+1)}$ into $G_{(\ell, \alpha)} \setminus G_{(\ell, \alpha+1)}$, as wanted in $(*_{14})$. 
\begin{enumerate}[$(*_{15})$]
	\item if $f \in \mrm{End}(G)$ is onto and  $\alpha < \lambda$, then $f$ maps $G_{(\ell, \alpha)} \setminus G_{(\ell, \alpha+1)}$ onto itself.
\end{enumerate} 
Why? As $f$ maps $G_{(\ell, \alpha)}$ onto $G_{(\ell, \alpha)}$ (by $(*_{14.5})$) and $G_{(\ell, \alpha+1)}$ into $G_{(\ell, \alpha)}$ (by $(*_{15})$).
\begin{enumerate}[$(*_{16})$]
	\item If $f \in \mrm{End}(G)$ is onto, then $f$ is $1$-to-$1$.
\end{enumerate}
[Why? By $(*_{13})$ and $G = \langle G_1 \oplus G_2 \rangle^*_G$, it suffices to show that, fixed $\ell \in \{1, 2\}$, $0 \neq x \in G_\ell$ implies that $f(x) \neq 0$. Let $\alpha < \lambda$ be minimal such that $x \in G_{(\ell, \alpha)} \setminus G_{(\ell, \alpha+1)}$, which is justified as $\bigcap_{\alpha < \lambda} G_{\ell, \alpha} = \{0\}$ and $(G_{(\ell, \alpha)} : \alpha \leq \lambda)$ is $\subseteq$-decreasing continuous, then by $(*_{14})$ we are done, as $f(x) \in G_{(1, \alpha)} \setminus G_{(1, \alpha+1)}$, and so $f(x) \neq 0$.]

\smallskip 
\noindent
As, from $(*_{12})$ on we have been assuming to work in $\mathbf{V}^{\mrm{Levy}(\aleph_0, \lambda)}$, we conclude that $G$ is indeed not only Hopfian but absolutely Hopfian, and so we are done.
\end{proof}

	\begin{remark}\label{remark_4prime} Using ideas on the line of \cite{4_primes} (cf. the use of four primes), the proof of Theorem~\ref{first_main_th} can be simplified using only a small (finite) number of primes, and thus in particular it works for $R$-modules with $R$ having at least that amount of primes, but we choose not to follow this route in order to simplify the proof.
\end{remark}

\end{document}